\numberwithin{equation}{section} 
\newtheorem{lemma}{Lemma}
\newtheorem{theorem}{Theorem}
\newtheorem{remark}{Remark}
\title{On the Number of Almost Empty Monochromatic Triangles}
\author{Bhaswar B. Bhattacharya\thanks{Department of Statistics and Data Science, University of Pennsylvania, \texttt{bhaswar@wharton.upenn.edu}} \and 
Sandip Das\thanks{Advanced Computing and Microelectronics Unit, Indian Statistical Institute, Kolkata, \texttt{sandip.das.69@gmail.com}}
\and 
Sk Samim Islam\thanks{Advanced Computing and Microelectronics Unit, Indian Statistical Institute, Kolkata, \texttt{samimislam08@gmail.com}}
\and 
Aashirwad Mohapatra\thanks{Advanced Computing and Microelectronics Unit, Indian Statistical Institute, Kolkata, \texttt{aashirwad\_r@isical.ac.in}} 
\and 
Ishan Paul\thanks{Department of Statistics, University of Michigan, \texttt{ishanpl@umich.edu}} 
\and 
Saumya Sen\thanks{Advanced Computing and Microelectronics Unit, Indian Statistical Institute, Kolkata, \texttt{saumyasen72@gmail.com}}
}
\date{}
\begin{document}

\maketitle

\begin{abstract} 
In this paper, we consider the problem of counting almost empty monochromatic triangles in colored planar point sets, that is, triangles whose vertices are all assigned the same color and that contain only a few interior points. Specifically, we show that any $c$-coloring of a set of $n$ points in the plane in general position (that is, no three on a line) contains $\Omega(n^2)$ monochromatic triangles with at most $c-1$ interior points and $\Omega(n^{\frac{4}{3}})$ monochromatic triangles with at most $c-2$ interior points, for any fixed $c \geq 2$. The latter, in particular, generalizes the result of \citet{pach2013monochromatic} on the number of monochromatic empty triangles in 2-colored point sets, to the setting of multiple colors and monochromatic triangles with a few interior points. We also derive the limiting value of the expected number of triangles with $s$ interior points in random point sets, for any integer $s \geq 0$. As a result, we obtain the expected number of monochromatic triangles with at most $s$ interior points in random colorings of random point sets. 
\end{abstract}

\section{Introduction }

The  Erd\H{o}s-Szekeres theorem \cite{erdos1935combinatorial} is a celebrated result in combinatorial geometry and Ramsey theory, which asserts that for every positive integer $r \geq 3$, there exists a smallest integer $\mathsf{ES}(r)$, such that any set of at least $\mathsf{ES}(r)$ points in the plane in general position (that is, no three on a line) contains $r$ points which lie on the vertices of a convex polygon. In particular, Erd\H{o}s and Szekeres \cite{erdos1935combinatorial,erdHos1960some} established the following bounds:
$$2^{r - 2}+1 \leq \mathsf{ES}(r) \leq {{2r - 4} \choose {r - 2}} + 1.$$
They also famously conjectured that the lower bound is an equality. 
After a series of improvements, in a major breakthrough \citet{suk2017polygon} 
almost resolved this conjecture by showing that
$$\mathsf{ES}(r) = 2^{r+o(r)}.$$ 
Later, \citet{holmsen2020two} improved the exponent to $r+O(r^{\frac{2}{3}} \log r )$.

A stronger version of the above problem, formulated by Erd\H{o}s in \cite{erdos1978some}, asks whether every sufficiently large planar point set in general position contains a $r$-{\it hole}, that is, a convex $r$-gon whose interior contains no other points of the set. To this end, denote by $\mathsf{H}(r)$ the minimum number (if it exists) such that any set of $\mathsf{H}(r)$ points in the plane in general position contain a $r$-hole, for $r \geq 3$. Trivially, $\mathsf{H}(3)=3$; Esther Klein initially showed that $\mathsf{H}(4)=5$; and \citet{harborth1978konvexe} proved that $\mathsf{H}(5)=10$. \citet{horton1983sets} showed that $\mathsf{H}(r)$ does not exist for $r \ge7$ by constructing arbitrarily large point sets without a 7-hole. The finiteness of $\mathsf{H}(6)$, after being open for several years,  was established independently by \citet{gerken2008empty} and \citet{nicolas2007empty}. In a recent breakthrough, \citet{heule2024happy} proved that $\mathsf{H}(6) = 30$, finally resolving the 6-hole problem.

The non-existence of $\mathsf{H}(r)$, for $r \geq 7$, prompted the study of various relaxations. One such relaxation is the notion of almost empty polygons, that is, convex polygons containing few interior points \cite{nyklova2003almost,koshelev2011computer,huemer2022weighted} (which is also closely related to the notion islands \cite{bautista2011computing,balko2022holes}).
A weaker restriction where the number of interior points is divisible by a certain number has been studied in \cite{bialostocki1991some}. 
In another direction, \citet{devillers2003chromatic} studied colored variations of the Erd\H{o}s-Szekeres problem. In these problems, a set of points is colored with $c \ge 2$ colors, and a polygon spanned by points in the set is called {\it monochromatic} if all its vertices have the same color. In this context, \citet{grima2009some} showed that any set of 9 points arbitrarily colored with 2 colors contains a monochromatic 3-hole. On the other hand, \citet{devillers2003chromatic} demonstrated the existence of arbitrarily large 3-colored point sets with no monochromatic 3-hole, as well as arbitrarily large 2-colored point sets with no monochromatic 5-hole. They also conjectured that every sufficiently large 2-colored point set contains a 4-hole. Although this conjecture remains open, \citet{aichholzer2010large} established a relaxed version in which the 4-hole is not required to be convex. 

Combining the notions of almost empty polygons and monochromatic holes,  \citet{colorempty} initiated the study of almost empty monochromatic triangles. Formally, given integers $r \geq 3$, $c \geq 2$, and $s \geq 0$, define $\mathsf{M}_r(c,s)$ to be the least integer (if it exists) such that any set of at least $\mathsf{M}_r(c, s)$  points in the plane colored with $c$ colors contains a monochromatic convex $r$-gon with at most $s$ interior points. Moreover, denote by $\lambda_r(c)$ the least integer such that $\mathsf{M}_r(c, \lambda_r(c)) < \infty$. \citet{colorempty} considered the case of triangles, that is, $r=3$ and proved that, for any $c \geq 2$, 
\begin{align}\label{eq:trianglecolorempty}
\left\lfloor \frac{c-1}{2} \right\rfloor \leq \lambda_3(c) \leq c-2. 
\end{align}
In particular, for $c=3$, this means any sufficiently large 3-colored point set contains a monochromatic triangle with at most 1 interior point. For $c \geq 4$, Cravioto et al. \cite{cravioto2019almost} improved the upper bound in \eqref{eq:trianglecolorempty} to $c-3$. They also showed that $\lambda_4(c) \leq 2c-3$, for $c \geq 2$ (see \cite[Theorem 3]{cravioto2019almost}). The upper bound on $\lambda_4(c)$ can be improved to $2c-4$ if the convexity requirement is relaxed 
\cite{bhattacharya2025almost}.

Existence problems for convex polygons (including their empty, almost-empty, and colored variants) are closely linked to the question of counting such configurations. This dates back to \citet{erdos1973crossing}, where the problem of determining the number of convex $r$-gons contained in any set of $n$ points was first posed. Trivially, for $r=3$, the number of triangles in any set of $n$ points in the plane in general position is ${n \choose 3}$. If, in addition, the triangles are required to be empty, then
\citet{katchalski1988empty} showed that any set of $n$ points in the plane contains at least ${{n-1} \choose 2}$ empty triangles and also constructed sets with at most $O(n^2)$ empty triangles. Thereafter, the problem of counting number of empty triangles and its variations have been extensively studied (see \cite{barany2004planar,barany1987empty,barany2013many,erdos1992geometry,garcia2011note,pinchasi2006empty,researchbook,dehnhardt1987leere,valtr1992,aichholzer2020superlinear,triangles,balko2023tight,dumitrescu2000planar,aichholzer2014lower} and the references therein). In particular,  suppose  
$Z(A)$ denotes the number of empty triangles in a set $A$ of $n$ points and $Z(n)= \min_{|A| = n} Z(A)$. Then the current best bounds are: 
$$n^2 + \Omega(n \log^{\frac{2}{3}} n) \leq Z(n) \leq 1.6196 n^2 + o(n^2) , $$
where the upper bound is from \citet{barany2004planar} and the lower bound is due to \citet{aichholzer2020superlinear}. There has also been a series of papers studying empty triangles and, more generally, empty simplices, in random point sets \cite{reitzner2024stars,valtr1992,balko2022holes}. Specifically, if $S$ is a random collection of $n$ points chosen uniformly and independently from a planar convex set of area~1, then $\mathbb E[Z_n(S)] = 2n^2 + o(n^2)$ (see \cite[Theorem 1.4]{reitzner2024stars}). 
In the chromatic setting, complementing the existential results in \cite{garcia2011note,devillers2003chromatic}, \citet{aichholzer2009empty} showed that any 2-colored planar point set in general position of size $n$, contains $\Omega(n^{\frac{5}{4}})$ empty monochromatic triangles, for $n$ sufficiently large. This bound was improved to $\Omega(n^{\frac{4}{3}})$ by \citet{pach2013monochromatic}. On the other hand, the magnitude of the upper bound remains at $O(n^2)$ which, in particular, can be obtained by randomly 2-coloring a random collection of $n$ points in the unit square. Lower bounds on the number of empty monochromatic simplices are derived in \citet{aichholzer2014empty}. This implies

In this paper, we initiate the study of counting almost empty monochromatic triangles in planar point sets. Specifically, we obtain two lines of results: (1) lower bounds on the number of almost empty monochromatic triangles (Section \ref{sec:lowerbound}), and (2) precise asymptotics for the expected number of almost empty triangles in random point sets (Section \ref{sec:random}). Using the latter, we derive the asymptotics of the expected number of almost empty monochromatic triangles in  random colorings of random point sets. 

\subsection{Lower Bounds for the Number of Almost Empty Monochromatic Triangles }
\label{sec:lowerbound}

Given a finite set $P \subset \mathbb R^2$ and an integer $s \geq 0$, denote by  
$X_{\leq s}^{(c)}(P)$ the minimum number of monochromatic triangles with at most $s$ interior points in any $c$-coloring of $P$; and $$X_{\leq s}^{(c)}(n) = \min_{P \subseteq \mathbb R^2: |P|= n} X_{\leq s}^{(c)}(P).$$ 
Note that $X_{\leq 0}^{(c)}(n)$ corresponds to the minimum number of monochromatic empty triangles in any $c$-coloring of an $n$-point set in general position. In this notation, the results of \citet{aichholzer2009empty} and \citet{pach2013monochromatic} can be stated as: $X_{\leq 0}^{(2)}(n) =\Omega(n^{\frac{5}{4}})$  and $X_{\leq 0}^{(2)}(n) =\Omega(n^{\frac{4}{3}})$, respectively. Our first result is a lower bound on $X_{\leq (c-1)}^{(c)}(n)$.

\begin{theorem}\label{thm:c_c-11}   
For $c \geq 2$ fixed, $X_{\leq (c-1)}^{(c)}(n) \geq L n^2$, for some constant $L= L(c)$ depending on $c$ and $n$ sufficiently large. 
\end{theorem}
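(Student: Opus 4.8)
The plan is to reduce to a single color class and then count monochromatic triangles through a radial (angular) sweep, controlling the number of interior points of other colors by a pigeonhole applied to angular sectors rather than to the triangles themselves.

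First I would invoke the pigeonhole principle to fix a color class $C$ with $m := |C| \geq n/c$, and write $D := P \setminus C$, so that $|D| = n - m \leq \frac{(c-1)n}{c}$. Since $m^2 = \Omega(n^2)$ for fixed $c$, it suffices to exhibit $\Omega(n^2)$ triangles whose vertices all lie in $C$ and whose interiors contain at most $c-1$ points of $P$. Next, I fix an apex $p \in C$ and sort the remaining points of $C$ around $p$ by angle. Each pair of angularly consecutive points $q, q'$ (there are at least $m-2$ such pairs, discarding the reflex sector when $p$ lies on the convex hull of $C$) determines an angular sector at $p$; the triangle $pqq'$ is contained in that sector and, by consecutiveness, contains no point of $C$ in its interior. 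Hence every interior point of $pqq'$ belongs to $D$ and, using general position (no point of $D$ is collinear with $p$ and a point of $C$), its angular position around $p$ falls strictly between those of $q$ and $q'$. Consequently the number of interior points of $pqq'$ is at most the number of points of $D$ whose angular position around $p$ lies in that sector.

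The crux of the argument is the following counting, and it is exactly here that the sharp threshold $c-1$ appears. For a fixed apex $p$, each point of $D$ lies in the angular range of at most one of the sectors, so the sector counts sum to at most $|D|$. Therefore at most $|D|/c$ sectors can contain $c$ or more points of $D$ by angle, which leaves at least $(m-2) - |D|/c$ sectors whose triangle $pqq'$ has at most $c-1$ interior points. Plugging in $m \geq n/c$ and $|D| \leq (c-1)n/c$, this number of good sectors is at least $\frac{n}{c^2} - 2 = \Omega(n)$. Summing the resulting good (apex, pair) incidences over all $m$ choices of apex yields $\Omega(n^2)$ monochromatic triangles with at most $c-1$ interior points, each counted at most three times (once per vertex serving as apex); dividing by $3$ gives $\Omega(n^2)$ distinct such triangles, establishing the claim with $L = L(c) = \Omega(c^{-3})$.

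The main obstacle, and the point I expect to require the most care, is obtaining the sharp constant $c-1$ rather than some larger bound growing with $c$. A direct averaging of $\sum_T |D \cap T|$ against the number of triangles would also give $\Omega(n^2)$ triangles with $O(c)$ interior points, but it loses constant factors through the three-fold overcounting and through Markov's inequality, and hence cannot pin the threshold at $c-1$. The device that resolves this is to bound interior points by \emph{angular} sector counts: this replaces a quantity depending on distances by one depending only on angles, so that the per-apex budget sums exactly to $|D|$ and the majority-class inequality $n/m \leq c$ feeds in cleanly to produce precisely $c-1$. The remaining routine checks are the handling of hull apexes (the discarded reflex sector costs only a negligible $O(1)$ per apex) and verifying that the multiplicity of each distinct triangle is at most three.
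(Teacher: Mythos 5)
Your proof is correct and takes essentially the same approach as the paper: the paper likewise fixes the largest color class $P_1$ (so $|P_1| \geq n/c$), forms at each $p \in P_1$ the star triangulation whose triangles incident to $p$ are exactly your angularly-consecutive-pair triangles, and pigeonholes the at most $n - |P_1| \leq \frac{(c-1)n}{c}$ points of other colors over these interior-disjoint triangles to get at least $|P_1| - \frac{n-|P_1|}{c} - 2 \geq \frac{n}{c^2} - 2$ good triangles per apex, then sums over apexes and divides by $3$. Your angular-sector bookkeeping is just an explicit rendering of the paper's disjoint-interiors argument, yielding the identical per-apex count and the same final constant of order $c^{-3}$.
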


Theorem~\ref{thm:c_c-11} can be proved by a simple argument that counts the number of monochromatic triangles that can contain $c$ or more points in a star triangulation centered at a point (see Section~\ref{sec:c_c-11pf} for details). Later in Section \ref{sec:random} we will show that random point sets, in particular, have $O(n^2)$  triangles with at most $s$ interior points, for any $s \geq 0$. Hence, the dependence on $n$ in Theorem~\ref{thm:c_c-11} is best possible.

\begin{remark} 
{\em Specifically, for $c=2$, Theorem~\ref{thm:c_c-11} shows that any $2$-colored point set contains $\Theta(n^2)$ monochromatic triangles with at most one interior point. In comparison, recall that for monochromatic empty triangles in 2-colored point sets the best lower bound is $\Omega(n^{\frac{4}{3}})$ (due to \citet{pach2013monochromatic}). While improving this lower bound, possibly up to $\Omega(n^2)$, remains open, Theorem~\ref{thm:c_c-11} shows that if the emptiness condition is relaxed to allow at most one interior point, an asymptotically tight $\Omega(n^2)$ lower bound can be obtained easily.  } 
\end{remark}

Next, we consider monochromatic triangles with at most $c-2$ interior points in $c$-colored point sets. The upper bound in \eqref{eq:trianglecolorempty} readily implies a linear lower bound on $X_{\leq (c-2)}^{(c)}(n)$. In the next theorem, we establish a superlinear lower bound.

\begin{theorem}\label{thm:k_general}   
For $c \geq 2$ fixed, $X_{\leq (c-2)}^{(c)}(n) \geq L n^{\frac{4}{3}}$, for some constant $L= L(c)$ depending on $c$ and $n$ sufficiently large. 
\end{theorem}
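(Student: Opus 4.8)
The plan is to reduce the $c$-colour problem with interior budget $c-2$ to the two-colour empty case, for which \citet{pach2013monochromatic} already supply $X_{\leq 0}^{(2)}(m) = \Omega(m^{\frac{4}{3}})$; this is exactly our statement when $c=2$, so that case is immediate. For $c \geq 3$ I would argue by induction on the number of colours, aiming to lose only a constant factor (absorbed into $L(c)$) at each step so the exponent $\tfrac{4}{3}$ never degrades. Given a $c$-coloured $n$-point set $P$, let $P_c$ be a sparsest colour class, so $|P_c| \leq n/c$, and put $P' = P \setminus P_c$. Since $P'$ carries a $(c-1)$-colouring on $|P'| \geq (c-1)n/c = \Omega(n)$ points, the inductive hypothesis yields a family $\mathcal{T}$ of $\Omega(|P'|^{\frac{4}{3}}) = \Omega(n^{\frac{4}{3}})$ monochromatic triangles, each with at most $(c-1)-2 = c-3$ interior points of $P'$. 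Reinserting $P_c$, a triangle $T \in \mathcal{T}$ gains at most $|T \cap P_c|$ new interior points, so every $T$ with $|T \cap P_c| \leq 1$ becomes a monochromatic triangle with at most $c-2$ interior points of $P$. The budget matches the recursion exactly: each of the $c-2$ peeling steps from $c$ down to $2$ is permitted one extra interior point, and the base case is the genuinely empty two-colour situation.

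The crux, which I expect to be the main obstacle, is to show that a positive fraction of $\mathcal{T}$ survives reinsertion, i.e.\ that $\Omega(n^{\frac{4}{3}})$ of these triangles contain at most one point of $P_c$. A naive averaging bound fails: a single point of $P_c$ near the centre can lie in very many triangles of $\mathcal{T}$, so one cannot rule out that most triangles absorb two or more points of the removed class. The quantity one must control is $\sum_{T \in \mathcal{T}} \binom{|T \cap P_c|}{2} = \sum_{\{x,y\} \subseteq P_c} \#\{T \in \mathcal{T} : x,y \in \mathrm{int}(T)\}$, and bounding the number of triangles of $\mathcal{T}$ passing through a fixed pair of $P_c$-points requires geometric information that the bare inductive hypothesis does not provide. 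I would therefore strengthen the statement carried through the induction so that the triangles it produces come equipped with exactly this kind of control.

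This points to what I believe is the real engine of the proof. The exponent $\tfrac{4}{3}$---the signature of the crossing lemma and of point--line incidence bounds---strongly suggests that, rather than using \citet{pach2013monochromatic} as a black box, one should adapt their crossing-number argument with \emph{empty} relaxed to \emph{at most $c-2$ interior points}. In that framework one builds a graph on $P$ whose edges are same-coloured pairs $\{p,q\}$ spanning a monochromatic triangle with few interior points, argues that this graph has $\Omega(n)$-order density via a pigeonhole that exploits the slack (with $c$ colours and at most $c-2$ interior points there is always a colour absent from the interior, forcing monochromatic almost-empty structure on any sufficiently rich edge), and then shows that too few monochromatic almost-empty triangles would force an implausibly planar drawing, contradicting the crossing lemma. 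Making the slack-based pigeonhole quantitatively compatible with the crossing-number estimate---so that the interior points contributed by the other $c-2$ colours are genuinely capped while the same-colour edge count stays large---is the technical heart I would need to push through.
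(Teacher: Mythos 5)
Your colour-peeling induction fails at exactly the point you yourself flag, and nothing in the proposal closes the hole. The inductive hypothesis hands you only a \emph{count}: $\Omega(n^{4/3})$ monochromatic triangles, each with at most $c-3$ interior points of $P'$, with no control whatsoever over their shape or location. Triangles that are nearly empty of $P'$ can still be long, thin, and mutually overlapping, so nothing prevents every triangle in the family $\mathcal{T}$ from containing a common small disc that is free of $P'$ but holds two points of the removed class $P_c$; in that configuration every triangle absorbs at least two new interior points and the reinsertion yields nothing. Since the colouring is adversarial, you cannot dismiss this as pathological, and you correctly observe that controlling $\sum_{T}\binom{|T\cap P_c|}{2}$ needs geometric information the induction does not carry --- but you never supply the strengthened inductive statement that would carry it. Your fallback (redo the crossing-number argument with ``empty'' relaxed) is a research plan rather than a proof: no graph is defined, no density bound is proved, and you explicitly defer the ``technical heart.'' It also starts from a misdiagnosis: the exponent $\tfrac{4}{3}$ here is not a crossing-lemma artifact. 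The proof of \citet{pach2013monochromatic}, and the paper's generalization of it, gets $n^{4/3}$ from a recursion that balances a discrepancy threshold of order $n^{1/3}$ against finding $\Omega(n)$ points each incident to $\Omega(n^{1/3})$ good triangles.

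For comparison, the paper's actual engine is a discrepancy argument, which is also the correct quantitative form of the ``slack pigeonhole'' you gesture at. Define $\delta(S) = \max_{a}|S_a| - |S|/c$. The Discrepancy Lemma (Lemma~\ref{lemma3}) says that if $\delta(S)$ is large, then $S$ already contains $\gtrsim \delta(S)|S|/c$ monochromatic triangles with at most $c-2$ interior points: star-triangulate the largest colour class $S_1$; the $\geq |S_1|-2$ colour-$1$ triangles have pairwise disjoint interiors containing no points of $S_1$, so at most $(|S|-|S_1|)/(c-1)$ of them can hold $c-1$ or more points of the other colours, and large discrepancy makes $|S_1| - (|S|-|S_1|)/(c-1)$ large. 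The main proof then recursively processes the point set: if the current set (or suitable derived subsets) has discrepancy exceeding $Kn^{1/3}$, the Discrepancy Lemma finishes immediately with $\gtrsim n^{4/3}$ triangles; otherwise all colour classes are balanced, the convex hull of the colour-$1$ points has $O(n^{1/3})$ vertices (else deleting them forces the required discrepancy), so some triangle of the fan triangulation of that hull contains $\gtrsim n^{2/3}$ colour-$1$ points, and the Order Lemma (Lemma~\ref{lemma2}) combined with the same disjoint-interior counting yields one hull vertex incident to $\gtrsim n^{1/3}$ monochromatic triangles with at most $c-2$ interior points; that vertex is deleted and the recursion continues. Either some step exits via discrepancy, or $\Omega(n)$ deleted vertices each certify $\Omega(n^{1/3})$ triangles, and since each such triangle uses its deleted vertex, the counts add. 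That recursion --- not colour-peeling, and not the crossing lemma --- is what makes the theorem go through, so the proposal as written has a genuine, unrepaired gap.
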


The proof of Theorem \ref{thm:k_general} is given in Section \ref{sec:k_generalpf}. Note that Theorem \ref{thm:k_general} for $c=2$ (which corresponds to the number of monochromatic empty triangles in 2-colored points sets) is the result of \citet{pach2013monochromatic}. Theorem \ref{thm:k_general} generalizes this to monochromatic triangles in $c$-colored point sets with at most $c-2$ interior points, for any $c \ge 2$. A key observation in the proof is that the difference between the maximum and the average cardinalities of the color classes is the natural notion of discrepancy when counting monochromatic triangles with $c-2$ interior points. The argument then proceeds by recursively removing points from the set in a way such that one of the following situations occurs: (1) one color class contains significantly more points than the average (that is, the point set has large discrepancy), which yields the desired number of monochromatic triangles with at most $c-2$ interior points; or (2) there exists a linear number of points, each of which is incident on many monochromatic triangles with at most $c-2$ interior points. Optimizing the thresholds, one obtains $\Omega(n^{\frac{4}{3}})$ monochromatic triangles with at most $c-2$ interior points in either case.

\begin{remark}
{\em An important special case of Theorem \ref{thm:k_general} is when $c=3$. Recall that \cite[Theorem~3.3]{devillers2003chromatic} showed that there exist arbitrarily large 3-colored point sets with no monochromatic empty triangles. On other hand, monochromatic triangles with at most 1 interior point exist in any 3-coloring of a sufficiently large point set. Specifically, \citet{colorempty}  showed that $\mathsf{M}_3(3, 1) = 13$, which readily implies a linear lower bound on $X_{\leq 1}^{(3)}(n)$. Theorem \ref{thm:k_general} for $c=3$, implies the super-linear lower bound $X_{\leq 1}^{(3)}(n)= \Omega(n^{\frac{4}{3}})$, that is, any 3-coloring of a set of $n$ points contains at least $\Omega(n^{\frac{4}{3}})$ monochromatic triangles with at most 1 interior point. }
\end{remark}

\subsection{Almost Empty Triangles in Random Point Sets }
\label{sec:random}

For any integer $s \geq 0$ and a finite set $A \subset \mathbb R^2$, denote by $Z_{=s}(A)$ the number of triangles with vertices in $A$ that contain exactly $s$ points of $A$ in its interior. Clearly, 
\begin{align}\label{eq:XA}
Z_{\leq s} (A) = \sum_{r=0}^s Z_{=r} (A). 
\end{align}
(Note that $Z_{\leq 0}(A) = Z_{= 0}(A)$ corresponds to the number of empty triangles in $A$, which is the same as $Z(A)$ defined in the Introduction.) In the following result we derive the limiting value of the expectation of $Z_{=s}(\cdot)$, and consequently $Z_{\leq s}(\cdot)$, for a collection of randomly chosen points.

\begin{theorem}\label{thm:U}
Suppose $\{U_1, U_2, \ldots, U_n\}$ be a collection of independent and identically distributed points in the unit square $[0, 1]^2$. Then, for any fixed integer $s \geq 0$, 
\begin{align}\label{eq:XA2}
\lim_{n \rightarrow \infty} \frac{\mathbb E[Z_{=s}(\{U_1, U_2, \ldots, U_n\})]}{n^2}  = 2 . \end{align}
Consequently, from \eqref{eq:XA},  
\begin{align}\label{eq:XA2s}
\lim_{n \rightarrow \infty} \frac{\mathbb E[Z_{ \leq s}(\{U_1, U_2, \ldots, U_n\})]}{n^2}  = 2 (s+1) . 
\end{align} 
\end{theorem}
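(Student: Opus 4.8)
The plan is to combine linearity of expectation with the observation that, conditioned on the three vertices, the number of points falling inside the triangle is Binomial. Write $T$ for the area of the triangle spanned by three independent uniform points $U_1,U_2,U_3$ in $[0,1]^2$. Since the square has area $1$, each of the remaining $n-3$ points lands inside this triangle independently with probability $T$, so conditionally on $T$ the interior count is $\mathrm{Binomial}(n-3,T)$. By linearity and symmetry,
\begin{align*}
\mathbb E\big[Z_{=s}(\{U_1,\ldots,U_n\})\big] = \binom{n}{3}\binom{n-3}{s}\,\mathbb E\!\left[T^s(1-T)^{n-3-s}\right] = \binom{n}{3}\binom{n-3}{s}\int_0^{1/2} t^s(1-t)^{n-3-s} f_T(t)\,\mathrm{d}t,
\end{align*}
where $f_T$ is the density of the area of a random triangle in the unit square, supported on $(0,1/2]$. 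The entire problem reduces to the asymptotics of this single integral.

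Next I would carry out a Laplace-type analysis. Because $(1-t)^{n-3-s}\approx e^{-nt}$, the integral concentrates on the scale $t=O(1/n)$; substituting $t=u/n$ gives
\begin{align*}
\int_0^{1/2} t^s(1-t)^{n-3-s} f_T(t)\,\mathrm{d}t = \frac{1}{n^{s+1}}\int_0^{n/2} u^s\Big(1-\tfrac{u}{n}\Big)^{n-3-s} f_T\!\Big(\tfrac{u}{n}\Big)\,\mathrm{d}u.
\end{align*}
Provided $f_T$ is continuous at $0$ with a finite positive limit $\kappa:=\lim_{t\to 0^+}f_T(t)$, dominated convergence (using $(1-u/n)^{n-3-s}\le 2^{3+s}e^{-u}$ on $u\le n/2$, a uniform bound on $f_T$ near $0$, and the fact that the region near $t=1/2$ is exponentially suppressed) yields $\int_0^{n/2} u^s(1-u/n)^{n-3-s} f_T(u/n)\,\mathrm{d}u \to \kappa\int_0^\infty u^s e^{-u}\,\mathrm{d}u = \kappa\, s!$. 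Combining this with $\binom{n}{3}\sim n^3/6$ and $\binom{n-3}{s}\sim n^s/s!$, all the $s$-dependent factors cancel and
\begin{align*}
\lim_{n\to\infty}\frac{\mathbb E\big[Z_{=s}(\{U_1,\ldots,U_n\})\big]}{n^2} = \frac{\kappa}{6},
\end{align*}
a value independent of $s$. This $s$-independence is the conceptual heart of \eqref{eq:XA2}.

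It remains to identify $\kappa=f_T(0^+)$. I would do this by a direct geometric estimate: fixing the two vertices $X,Y$, a third point produces area at most $t$ iff it lies in the strip of half-width $2t/|XY|$ about the line $XY$, whose area inside the square is $\approx (4t/|XY|)\,\Lambda(X,Y)$ for small $t$, where $\Lambda(X,Y)$ is the length of the chord cut from $[0,1]^2$ by the line through $X$ and $Y$. Differentiating in $t$ gives $f_T(0^+)=4\,\mathbb E\!\left[\Lambda(X,Y)/|XY|\right]$, which is finite since $\Lambda\le\sqrt{2}$ and $\mathbb E[1/|XY|]<\infty$ (the distance density vanishes linearly at $0$). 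Evaluating this expectation gives $\kappa=12$, so the limit is $12/6=2$. Alternatively—and more cheaply—one may bypass the explicit constant: the analysis already shows the limit equals $\kappa/6$ for every $s$, and for $s=0$ it is known that the expected number of empty triangles is $2n^2+o(n^2)$ (cited in the Introduction), forcing $\kappa/6=2$ for all $s$. The ``consequently'' assertion \eqref{eq:XA2s} is then immediate from \eqref{eq:XA} and linearity, since $\mathbb E[Z_{\le s}]=\sum_{r=0}^s\mathbb E[Z_{=r}]$ with each summand contributing $2n^2(1+o(1))$.

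The main obstacle is the behavior of $f_T$ at the origin: one must prove that this density is continuous at $0$ with a finite, strictly positive limit—equivalently, that there is no logarithmic singularity, which would inflate the order to $n^2\log n$ and destroy the clean constant—and then justify the dominated-convergence passage uniformly in the rescaled variable. Pinning the exact value $\kappa=12$ via the chord-length expectation $\mathbb E[\Lambda(X,Y)/|XY|]$ is the remaining computational hurdle should a fully self-contained constant be desired; otherwise the known $s=0$ count supplies it.
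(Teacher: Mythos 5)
Your proposal is correct, and at its core it runs the same analysis as the paper, repackaged, with one genuinely different ingredient at the end. Both arguments begin with linearity plus the conditional $\mathrm{Binomial}(n-3,T)$ law, both rescale the area to the $1/n$ scale and pass to the limit by dominated convergence, and both rest on the same geometric fact: a triangle of area at most $t$ with base $[X,Y]$ forces the apex into a strip of total width $4t/|XY|$ around the line $XY$, so the chord length $\Lambda(X,Y)$ and the factor $1/|XY|$ govern the limit. The paper never introduces the density $f_T$: it stays inside the six-dimensional integral, parametrizes $u_3$ by its signed distance $h$ to the line through $u_1,u_2$, applies Fubini, substitutes $h=z/n$, and evaluates a Gamma integral, so the boundedness/continuity issues you flag are absorbed into an explicit dominated-convergence step. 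Your route compresses everything into the one-dimensional density $f_T$ and a Laplace argument, which is cleaner but makes you responsible for proving that $f_T$ is bounded near $0$ and continuous at $0^+$; your strip picture does deliver this (the $t$-derivative of the strip area in the square is at most $4\sqrt 2/|XY|$, and $\mathbb E[1/|XY|]<\infty$), though you only sketch it. The real divergence is in pinning the constant: the paper evaluates $\int_{[0,1]^4} \Lambda(u_1,u_2)/|u_1-u_2|\,\mathrm d u_1 \mathrm d u_2 = 3$ via the Blaschke--Petkantschin formula and an identity from Schneider--Weil, which is precisely your unproven claim $\kappa = 4\,\mathbb E[\Lambda/|XY|]=12$; your alternative---observing that your limit $\kappa/6$ is the same for every $s$ and then invoking the known $s=0$ asymptotics $2n^2+o(n^2)$ of \cite{reitzner2024stars}---is a legitimate and genuinely cheaper way to finish, trading the integral-geometry computation for reliance on the cited empty-triangle result; this $s$-independence-plus-bootstrap observation is something the paper's proof does not exploit.
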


The proof of Theorem \ref{thm:U} is given in Section \ref{sec:Upf}. Note that the result in \eqref{eq:XA2} for $s=0$ (which corresponds to the number of empty triangles) is known from \cite[Theorem 1.4]{reitzner2024stars}. Theorem \ref{thm:U} generalizes this to triangles with a fixed number of interior points.

\begin{remark} 
{\em A natural way to randomly color a set $P = \{p_1, p_2, \ldots, p_n\}$ of $n$ points in 
$\mathbb R^2$ is to independently assign to each point $p_i \in S$ a color $a \in \{1, 2, \ldots, c\}$ uniformly at random. More formally, 
\begin{align}\label{eq:color}
\mathbb P( p_i \in S \text{ is assigned color } a \in  \{1, 2, \ldots, c\}) = \frac{1}{c}, 
\end{align}
independently for each $1 \leq i \leq n$. Then the number of monochromatic triangles with at most $s$ interior points can be expressed as: 
\begin{align*} 
X^{(c)}_{\leq s}(P) = \sum_{1 \leq i < j < k \leq n} \bm 1
\{ |\mathrm{int}(\Delta(p_i, p_j, p_k)) \cap \{p_1, p_2, \ldots, p_n\}| \leq s \} \bm 1\{\phi(p_i) = \phi(p_j) = \phi(p_k) \}, 
\end{align*}
where $\phi(p_i)$ is the color of the point $p_i$ and $\mathrm{int}(\Delta(p_i, p_j, p_k))$ is the interior of the triangle with vertices $p_i, p_j, p_k$. Note that 
$$\mathbb P(\phi(p_i) = \phi(p_j) = \phi(p_k) ) = \frac{1}{c^2},$$
for all $1 \leq i < j < k \leq n$. Here, the probability is taken over the randomness of the coloring, given the point set $P$. Hence, taking expectation over the randomness of the coloring gives, 
\begin{align*} 
\mathbb E [X^{(c)}_{\leq s}(P) ] = \frac{1}{c^2}\sum_{1 \leq i < j < k \leq n} \bm 1
\{ |\mathrm{int}(\Delta(p_i, p_j, p_k)) \cap \{p_1, p_2, \ldots, p_n\}| \leq s \} 
& = \frac{1}{c^2} Z_{\leq s}(P) . 
\end{align*}
Hence, if we consider a random collection $n$ uniform points $\{U_1, U_2, \ldots, U_n\}$ in $[0, 1]^2$ and color them uniformly at random as in \eqref{eq:color} (given the realization of the point set), then taking joint expectation over the randomness of the points and the coloring gives, 
\begin{align}\label{eq:pointsrandomcolor}
\mathbb E [X^{(c)}_{\leq s}(\{U_1, U_2, \ldots, U_n\}) ]  = \frac{1}{c^2} \mathbb E [Z_{\leq s}(\{U_1, U_2, \ldots, U_n\}) ] = \frac{2(s+1)}{c^2} n^2 (1+ o(1)) , 
\end{align}
where the second equality follows from Theorem \ref{thm:U}. This shows that a random $c$-coloring of a random collection of $n$ points contains, on expectation, $O(n^2)$ monochromatic triangles with at most $s$ interior points, for $c \ge 2$ and $s \ge 0$ fixed, with the leading constant of the expected value given by \eqref{eq:pointsrandomcolor}. This implies the existence of $c$-colored $n$-point sets with $O(n^2)$ monochromatic triangles with at most $s$ interior points, for $s \ge 0$. } 
\end{remark}

\section{ Proofs of the Results }

In this section we prove the results stated in the previous section. We begin by collecting some notations and definitions in Section \ref{sec:defintions}. The proofs of Theorems \ref{thm:c_c-11}, \ref{thm:k_general}, \ref{thm:U} are then presented in Sections \ref{sec:c_c-11pf}, \ref{sec:k_generalpf}, \ref{sec:Upf}, respectively.

\subsection{Notations}
\label{sec:defintions}

To begin with we collect some notations and definitions that will be used throughout the paper. For any set  $A$ in $\mathbb{R}^1$ or $\mathbb{R}^2$, $\lambda(A)$ will denote the length or the area of the set, respectively. Also, $\mathrm{int}(A)$ will denote the interior of the set $A$. 

Given a finite set $S$ of points in the plane, $\Delta(s_i s_j s_k)$ will denote the triangle with vertices $s_i, s_j, s_k \in S$. The triangle $\Delta(s_i s_j s_k)$ is said to be $r$-{\it blocked}, for some integer $r \geq 0$, if $|\mathrm{int}(\Delta(s_i s_j s_k)) \cap S| \ \geq r$. Furthermore, $CH(S)$ will denote the convex hull of the set $S$, and $V(CH(S))$ will denote the set of vertices (the extreme points) of $CH(S)$.

For two sequences $\{a_n\}_{n \geq 1}$ and $\{b_n\}_{n \geq 1}$, we write $a_n \gtrsim b_n$ whenever $a_n \geq L b_n$, for all $n$ large enough, where $L = L(c) > 0$ is a constant that may depend on the number of colors $c$. Also, $a_n \sim b_n$ means $a_n /b_n \rightarrow 1$, as $n \rightarrow \infty$.

\subsection{Proof of Theorem \ref{thm:c_c-11} }
\label{sec:c_c-11pf}

Let $P = \{p_1, p_2, \ldots, p_n\}$ be a set of $n$ points in the plane in general position. Suppose each point in $P$ is assigned a color from the set $[c]:= \{1, 2, \ldots, c\}$. Denote by $\phi(p_i)$ the color of the point $p_i \in P$ and let $P_a$ be the subset of points of color $a \in [c]$, that is, 
\begin{align}\label{eq:colorP}
P_a := \{p \in P: \phi(p) = a\} .  
\end{align}
Clearly, $P_1, P_2, \ldots, P_c$ are disjoint sets and $\sum_{a=1}^c |P_a| = n$. Without loss of generality, assume $|P_1| \geq |P_2| \geq \cdots \geq |P_c|$.

Fix a point $p \in P_1$ and consider the star connecting $p$ to all other points in $P_1 \backslash \{p\}$. Complete this star to a triangulation of $P_1$. This triangulation has at least $|P_1|-2$ triangles of color $1$ with $p$ as a vertex. Observe that among these $|P_1|-2$ monochromatic triangles, at most $\frac{n-|P_1|}{c}$ them are $c$-blocked. Hence, for $n \geq 4 c^2$, we have at least 
\begin{align*} 
|P_1|- \frac{n-|P_1|}{c} - 2 & = \left(1 + \frac{1}{c}\right) |P_1| - \frac{n}{c} - 2 \\ 
& \geq   \left(1 + \frac{1}{c}\right) |P_1| - |P_1| - 2 \tag*{(since $|P_1| \geq \frac{n}{c}$)} \\ 
& = \frac{|P_1|}{c} - 2 \geq   \frac{|P_1|}{2c} , 
\end{align*} 
monochromatic triangles of color $1$ with $p$ as a vertex with at most $c-1$ interior points. Adding this over $p \in P_1$ and observing that each triangle is overcounted most 3 times, gives 
 $$X^{(c)}_{\leq (c-1)}(P) \geq  \frac{|P_1|^2}{6c} \gtrsim n^2 .$$
This completes the proof of Theorem \ref{thm:c_c-11}.  \hfill $\Box$

\subsection{Proof of Theorem \ref{thm:k_general} }
\label{sec:k_generalpf}

A key ingredient in the proof of Theorem \ref{thm:k_general} is the following lemma from \cite{aichholzer2009empty}.

\begin{lemma}[Order Lemma \cite{aichholzer2009empty}] \label{lemma2}
Let $\Delta(p_1p_2p_3)$ be a triangle with vertices $p_1, p_2, p_3$ containing the points $q_1, q_2, \ldots, q_m$ in its interior. Then the set $\{p_1, p_2, p_3, q_1, q_2, \ldots, q_m \}$ can be triangulated so that at least $m + \sqrt m + 1$ triangles have $p_1, p_2, p_3$ as one of their vertices.
\end{lemma}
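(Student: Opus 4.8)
The plan is to construct an explicit triangulation of $\{p_1,p_2,p_3,q_1,\ldots,q_m\}$ and to lower bound the number of its triangles incident to a vertex of $\Delta(p_1p_2p_3)$. First I would convert the count into a local quantity. Since the convex hull of these $m+3$ points is exactly $\Delta(p_1p_2p_3)$, every triangulation has precisely $2m+1$ triangles, so it is equivalent to show that at most $m-\sqrt m$ of them have all three vertices among $q_1,\ldots,q_m$. A short incidence count then shows that the number of triangles incident to at least one of $p_1,p_2,p_3$ equals $\deg(p_1)+\deg(p_2)+\deg(p_3)-6$, which in turn equals the number of edges joining a corner to an interior point: each hull edge $p_ip_j$ lies in exactly one triangle, no triangle is incident to all three corners (as $m\ge 1$), and every corner--interior edge is interior and hence lies in exactly two corner-incident triangles. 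Thus the whole problem reduces to building a triangulation that creates as many corner--interior edges as possible.

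Next I would build the triangulation by peeling the interior points into convex layers $L_1\supset L_2\supset\cdots\supset L_\ell$ with $|L_j|=n_j$ and $\sum_j n_j=m$, processed from the outside inward. Triangulating the annulus between $\Delta(p_1p_2p_3)$ and $L_1$ already attaches every point of $L_1$ to the corners; the subtle part is to keep attaching corners to the deeper layers, which I would arrange by opening a controlled gap in each completed layer so that a corner can see and connect to the next layer. It is convenient to track this incrementally: inserting a point into a triangle that already carries two corners raises the corner-incident count by $2$, inserting into a one-corner triangle raises it by $1$, and inserting into an all-interior triangle raises it by $0$, so the construction should route as many insertions as possible through the triangles touching the hull edges.

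The essential difficulty, and the source of the $\sqrt m$ term, is to guarantee the surplus beyond the naive linear contribution. Here I would invoke the dichotomy $\ell\cdot\max_j n_j\ge m$, so that $\max(\ell,\max_j n_j)\ge\sqrt m$. If some layer is large, a fan from a single corner across its exposed arc produces $\gtrsim\sqrt m$ additional corner--interior edges at once; if instead the points split into many layers, I would argue that the inward reaching construction contributes at least one extra corner-incident triangle per layer, giving a surplus of $\ell\ge\sqrt m$. Balancing the threshold between the two regimes, together with an Erd\H{o}s--Szekeres-type selection of a convex subchain of length $\ge\sqrt m$ to anchor the fan, should yield the bound $m+\sqrt m+1$. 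I expect the main obstacle to be exactly this reaching-through-layers step: showing that corner edges to deep layers can be inserted without crossings and that each layer is guaranteed to contribute its extra triangle. The layer-by-layer geometric bookkeeping, rather than the global incidence count, is where the real work lies.
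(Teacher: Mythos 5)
First, a point of comparison: the paper does not prove Lemma~\ref{lemma2} at all --- it is imported verbatim from \citet{aichholzer2009empty} --- so your proposal must stand entirely on its own. Your reduction is correct and worth keeping: every triangulation of the configuration has exactly $2m+1$ triangles; for $m\ge 1$ the number of corner-incident triangles equals $\deg(p_1)+\deg(p_2)+\deg(p_3)-6$, which equals the number of corner--interior edges; and the insertion accounting ($+2$/$+1$/$+0$ according to how many corners the host triangle has) is right. So the task is exactly: make every interior point adjacent to at least one corner (the base count $m$), and make at least $\sqrt m+1$ points adjacent to a second corner (the surplus).

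The genuine gap is that your outside-in layer-peeling construction cannot deliver the base count $m$, and this is precisely where the claimed ``surplus of $\ell$'' collapses. If you triangulate the annulus between $\Delta(p_1p_2p_3)$ and $L_1$ and then open one controlled gap per layer, a corner reaches only $O(1)$ points of each deeper layer, while \emph{all remaining points} of $L_2,\ldots,L_\ell$ are adjacent only to other interior points. Concretely, if every layer has $3$ points (so $m=3\ell$), your construction produces roughly $n_1+3+2(\ell-1)\approx 2\ell$ corner--interior edges, far short of $m+\sqrt m\approx 3\ell+\sqrt{3\ell}$: the per-layer gain is not a surplus on top of $m$, it is essentially the whole count. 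The large-layer case suffers the same defect --- even granting the unproved visibility claims, fanning a corner onto the exposed arc of a deep layer yields the $\sqrt m$ extra edges but leaves all other deep points with no corner neighbor. What is missing is a \emph{global} mechanism forcing every point to be corner-adjacent, and this is the actual content (and the name) of the Order Lemma: order the interior points by distance to the line through $p_2p_3$ and insert them farthest-first; since this distance is an affine function, the triangle of the current triangulation containing each newly inserted point must have a vertex at strictly smaller distance, which can only be $p_2$ or $p_3$. This yields at least $+1$ at every insertion (base $m+2$), and only then does one add a $\sqrt m$ surplus via a Dilworth/monotone-subsequence argument or a layer dichotomy applied on top of a sound base. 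A further caution: an ``Erd\H{o}s--Szekeres-type selection of a convex subchain of length $\ge\sqrt m$'' is not available in a general $m$-point set (Erd\H{o}s--Szekeres guarantees only logarithmic convex subsets; the cup/cap theorem needs exponentially many points); what genuinely produces $\sqrt m$ is the monotone subsequence with respect to two orders, or your dichotomy $\ell\cdot\max_j n_j\ge m$, neither of which rescues the construction as sketched.
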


Let $S$ be a finite set of points in the plane in general position, where each point $s \in S$ is assigned a color from the set $[c] := \{1, 2, \ldots, c\}$. Denote by $\phi(s)$ the color of the point $s$ and, for each $a \in [c]$, let $S_a \subseteq S$ be the subset of points with color $a$. The next lemma shows that if the cardinality of one of the color classes differs significantly from the average, then $S$ contains `many' monochromatic triangle with at most $c-2$ interior points. To this end, define the {\it discrepancy} of any set $S$ as: 
\begin{align}\label{eq:deltaS}
\delta(S) = \max_{a \in [c]} |S_a| - \frac{1}{c}\sum_{a=1}^c |S_a|  = \max_{a \in [c]} |S_a| - \frac{|S|}{c} . 
\end{align}
The following result is the version of the discrepancy lemma for monochromatic triangles with at most $c-2$ interior points, for $c \geq 3$, paralleling the analogous results for monochromatic empty triangles in \cite{aichholzer2009empty} and monochromatic empty simplices in \cite{aichholzer2014empty}.

\begin{lemma}[Discrepancy Lemma] \label{lemma3}
Let $S$ be a set of points in general position in the plane colored with $c \geq 2$ colors. If $\delta(S) > \frac{4 (c-1)}{c}$, then 
\begin{align}\label{eq:colorS}
X^{(c)}_{\leq (c-2)}(S) \geq \frac{ (\gamma(S)-2) |S| }{3c} \geq \frac{\delta(S) |S|}{ 6 c } ,  
\end{align}
where $\gamma(S) = \frac{c}{c-1} \delta(S)$.  
 \end{lemma}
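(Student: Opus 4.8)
The plan is to imitate the star-triangulation argument from the proof of Theorem~\ref{thm:c_c-11}, but with the ``blocking'' threshold lowered from $c$ to $c-1$, so that the triangles surviving the count have at most $c-2$ interior points. Assume without loss of generality that color $1$ is a largest class, and abbreviate $M := |S_1| = \max_{a \in [c]}|S_a|$ and $N := |S|$. The first step is purely algebraic: rewrite the quantity in the statement as
$$ \gamma(S) = \frac{c}{c-1}\delta(S) = \frac{cM - N}{c-1} = M - \frac{N-M}{c-1}. $$
The point of this rearrangement is that $N - M = |S \setminus S_1|$ is the number of \emph{non-majority} points, and this is precisely the quantity that will bound the number of blocked triangles below.

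Next I would fix a point $p \in S_1$ and order the remaining $M-1$ points of $S_1$ angularly around $p$; the consecutive angular wedges furnish a fan of at least $M-2$ triangles, all incident to $p$, all monochromatic of color $1$, with pairwise disjoint interiors contained in the convex hull $CH(S_1)$. The crucial structural observation is that, since every point of $S_1$ is a vertex of this fan and general position rules out angular ties, no point of $S_1$ lies in the interior of any fan triangle; hence the only points of $S$ that can be interior to these triangles are the $N-M$ points of $S \setminus S_1$. Because the fan interiors are disjoint, the total number of interior-point incidences across the fan is at most $N-M$, so at most $\frac{N-M}{c-1}$ of the fan triangles can be $(c-1)$-blocked (contain $\ge c-1$ interior points). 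Consequently at least
$$ (M-2) - \frac{N-M}{c-1} = \gamma(S) - 2 $$
fan triangles are monochromatic with at most $c-2$ interior points, each incident to $p$.

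Summing this count over all $p \in S_1$ produces at least $M(\gamma(S)-2)$ incidences. Since any monochromatic triangle with at most $c-2$ interior points has all three of its vertices in $S_1$, it is counted at most three times (once for each vertex serving as the fan center), which yields
$$ X^{(c)}_{\le (c-2)}(S) \ \ge\ \frac{M(\gamma(S)-2)}{3} \ \ge\ \frac{(\gamma(S)-2)\,N}{3c}, $$
where the last step uses $M \ge N/c$ (the largest class is at least the average). For the second inequality in \eqref{eq:colorS}, the hypothesis $\delta(S) > \frac{4(c-1)}{c}$ forces $\gamma(S) > 4$, hence $\gamma(S)-2 \ge \tfrac12\gamma(S)$; substituting $\gamma(S) = \frac{c}{c-1}\delta(S)$ then gives $\frac{(\gamma(S)-2)N}{3c} \ge \frac{\gamma(S)N}{6c} = \frac{\delta(S)N}{6(c-1)} \ge \frac{\delta(S)N}{6c}$.

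I expect the argument to be largely bookkeeping, with the one genuinely delicate point being the justification that the interiors of the fan triangles meet only the non-majority points, so that the blocked-triangle count is controlled by $N-M$ rather than by $N$. This is exactly what allows the threshold to be taken as $c-1$ instead of $c$, and thereby forces the surviving triangles to carry at most $c-2$ interior points; it is the feature that distinguishes this lemma from the easy $\Omega(n^2)$ count in Theorem~\ref{thm:c_c-11}. The hypothesis on $\delta(S)$ plays only the minor role of making $\gamma(S)-2$ comparable to $\gamma(S)$, and (in passing) of guaranteeing $M \ge 3$ so that the fan exists; notably, the Order Lemma is not needed here, as it will instead be deployed in the complementary low-discrepancy regime of the main proof.
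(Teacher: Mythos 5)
Your proof is correct and is essentially the paper's own argument: the paper also fixes a point $s$ in the largest color class, completes the star at $s$ to a triangulation of $S_1$ (whose triangles incident to $s$ are exactly your angular fan), bounds the number of $(c-1)$-blocked triangles by $\frac{|S|-|S_1|}{c-1}$ using the same disjoint-interiors/no-$S_1$-points-inside observation, and finishes with the identical sum-over-centers, divide-by-3, and $\gamma(S)>4$ bookkeeping. The only differences are presentational (explicit angular fan versus ``complete the star to a triangulation,'' and routing the final inequality through $\frac{\delta(S)|S|}{6(c-1)}$ rather than $\gamma(S)\ge\delta(S)$), neither of which changes the substance.
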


 \begin{proof} Without loss of generality we can assume $|S_1| \geq |S_2| \geq \cdots \geq |S_c|$. Then the discrepancy can be written as: 
$$\delta(S) =  |S_1| - \frac{1}{c}\sum_{a=1}^c |S_a| = \left(1-\frac{1}{c}\right) |S_1| - \frac{1}{c}\sum_{a=2}^c |S_a| . $$ 
Hence, 
\begin{align*}
|S_1|  =  \frac{1}{c-1} \sum_{a=2}^c |S_a| + \frac{c}{c-1} \delta(S) = \frac{1}{c-1} \sum_{a=2}^c |S_a| + \gamma(S) , 
\end{align*}
where $\gamma$ is as defined in the statement of Lemma \ref{lemma3}. Now, fix a point $s \in S_1$ and consider the star connecting $s$ to all other points in $S_1 \backslash \{s\}$. Complete this star to a triangulation of $S_1$. This triangulation has at least $|S_1|-2$ triangles of color $1$ with $s$ as a vertex. Observe that among these $|S_1|-2$ monochromatic triangles, at most $\frac{|S| - |S_1|}{c-1}$ them are $(c-1)$-blocked. Hence, we have at least 
$$|S_1|- \frac{|S| - |S_1|}{c-1} - 2  = |S_1| - \frac{1}{c-1} \sum_{a=2}^c |S_a|  -2 = \gamma(S) - 2 , $$
monochromatic triangles of color $1$ with $s$ as a vertex with at most $c-2$ interior points. Adding this over $s \in S_1$ and observing that each triangle is overcounted most 3 times, gives 
 \begin{align*}
X^{(c)}_{\leq (c-2)}(S) \geq  \frac{(\gamma(S) - 2) |S_1|}{3} \geq  \frac{(\gamma(S) - 2) |S|}{3 c} , 
 \end{align*} 
 since $|S_1| \geq \frac{|S|}{c}$ (because it is the color class with the largest cardinality). 
This proves the first inequality in \eqref{eq:colorS}. For the second inequality in \eqref{eq:colorS} observe that $\gamma(S) - 2 \geq \frac{\gamma(S)}{2}$, for $\gamma(S) > 4$, and $\gamma(S) = \frac{c}{c-1} \delta(S) \geq \delta(S)$. Hence, $\frac{(\gamma(S) - 2) |S|}{3 c} \geq \frac{\gamma(S) |S|}{ 6 c} \geq \frac{\delta(S) |S|}{ 6 c}$. 
\end{proof}

In the proof of Theorem \ref{thm:k_general}, we will apply the above lemma for $\alpha$ which is of smaller order than $|S|$. Then, a consequence of the discrepancy lemma is that the cardinalities of all the color classes are approximately close to the average (up to smaller order terms):

\begin{lemma} \label{lemma1}
Let $S$ be a set of points in general position in the plane colored with $c \geq 2$ colors. Then, for $a \in [c]$, 
\begin{align}\label{eq:Sc}
\frac{|S|}{c} - (c-1) \delta(S)  \leq |S_a| \leq \frac{|S|}{c} + \delta(S) ,\end{align}
where $\delta(S)$ is defined in \eqref{eq:deltaS}. 
\end{lemma}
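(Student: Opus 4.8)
The plan is to prove both inequalities by elementary counting directly from the definition of $\delta(S)$ in \eqref{eq:deltaS}; no geometry is needed here, so the lemma is really a statement about how the cardinalities of a partition of $S$ into $c$ classes relate to their maximum and their common average $\frac{|S|}{c}$.

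The upper bound is immediate. For any fixed $a \in [c]$, we trivially have $|S_a| \leq \max_{b \in [c]} |S_b|$, and by the definition in \eqref{eq:deltaS} the right-hand side equals $\frac{|S|}{c} + \delta(S)$. This gives $|S_a| \leq \frac{|S|}{c} + \delta(S)$, which is the upper inequality in \eqref{eq:Sc}.

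For the lower bound I would use the fact that the classes partition $S$, so $\sum_{b \in [c]} |S_b| = |S|$, and isolate the class of interest by writing
\begin{align*}
|S_a| = |S| - \sum_{b \neq a} |S_b|.
\end{align*}
Now I apply the upper bound just established to each of the remaining $c-1$ classes, namely $|S_b| \leq \frac{|S|}{c} + \delta(S)$ for every $b \neq a$. Summing these $c-1$ inequalities and substituting gives
\begin{align*}
|S_a| \geq |S| - (c-1)\left(\frac{|S|}{c} + \delta(S)\right) = |S| - \frac{(c-1)|S|}{c} - (c-1)\delta(S) = \frac{|S|}{c} - (c-1)\delta(S),
\end{align*}
which is exactly the lower inequality in \eqref{eq:Sc}.

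There is no real obstacle in this argument: both bounds are one-line consequences of the definition of the discrepancy and the counting identity $\sum_b |S_b| = |S|$. The only point worth stating cleanly is that the upper bound is used $c-1$ times (once per class other than $a$) to produce the factor $(c-1)$ in the lower bound, which is why the two sides of \eqref{eq:Sc} are asymmetric. I would not bother with the normalization $|S_1| \geq \cdots \geq |S_c|$ used elsewhere, since the argument holds uniformly for every index $a$.
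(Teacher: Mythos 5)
Your proof is correct and follows essentially the same route as the paper: both derive the upper bound directly from the definition of $\delta(S)$ and then obtain the lower bound by applying that upper bound to the other $c-1$ color classes together with the identity $\sum_{b} |S_b| = |S|$. The only difference is cosmetic — the paper phrases the lower bound as a proof by contradiction, whereas you argue it directly, which is if anything slightly cleaner.
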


\begin{proof} From \eqref{eq:deltaS} we have, for each $a \in [c]$, 
\begin{align}\label{eq:Scpf}
|S_a| \leq \max_{a \in [c]} |S_a| = \frac{|S|}{c} + \delta(S) , 
\end{align} 
which proves the upper bound in \eqref{eq:Sc}.

For the lower bound, without loss of generality assume $a=1$ and suppose, if possible,  
\begin{align*}
|S_1| < \frac{|S|}{c} - (c-1) \delta(S). 
\end{align*} 
Then using the above and the bound in \eqref{eq:Scpf}, for $a \geq 2$, implies that 
$$|S| = \sum_{a=1}^c |S_a| < \frac{|S|}{c} - (c-1) \delta(S) + \sum_{a=2}^c |S_a| \leq |S|, $$ 
which is a contradiction. Hence, the lower bound in \eqref{eq:Sc} holds, for all $a \in [c]$. 
\end{proof}

Equipped with the above lemmas, we now proceed with the proof of Theorem \ref{thm:k_general}. To this end, let $P = \{p_1, p_2, \ldots, p_n\}$ be a set of $n$ points in the plane in general position, where each point is assigned a color from the set $[c]:= \{1, 2, \ldots, c\}$. As in \eqref{eq:colorP}, let $P_a$ be the subset of points of color $a \in [c]$. Assume, without loss of generality $|P_1| \geq |P_2| \geq \cdots \geq |P_c|$. The proof proceeds recursively by removing points from $P$. To begin with, we set $P^{(0)} = P$, $\tilde n = \frac{n}{c(c+1)}$, and $K= \frac{1}{8 c^5}$. Clearly, $|P^{(0)}| \geq \tilde n$. Fix $t \geq 0$, and consider the set $P^{(t)}$ with $|P^{(t)}| \geq \tilde n \gtrsim n$. Note that if $\delta(P^{(t)})  >  K n^{\frac{1}{3}}$, then by Lemma \ref{lemma3}, 
$$X^{(c)}_{\leq (c-2)}(P^{(t)}) \gtrsim \frac{ \delta(P^{(t)}) |P^{(t)}|}{c} \gtrsim  n^{\frac{4}{3}} , $$
 as required. Hence, assume $\delta(P^{(t)}) \leq K n^{\frac{1}{3}}$. For $a \in [c]$, denote by $P^{(t)}_a \subseteq P^{(t)}$ the subset of points with color $a$. Then by Lemma \ref{lemma1},
\begin{align}\label{eq:P1}
\frac{|P^{(t)}|}{c} -  (c-1) K n^{\frac{1}{3}} \leq |P^{(t)}_a| \leq  \frac{|P^{(t)}|}{c} +  K n^{\frac{1}{3}} , 
\end{align}
for all $a \in [c]$. Now, build the convex hull of the points of color 1, remove all the points outside this convex hull and let $Q$ denote the remaining set of points. We now prove various properties of the set $Q$.

\begin{lemma}\label{lm:Q} 
Let $Q= Q_1 \cup Q_2 \cup \dots Q_c$ be the partition of $Q$ into the $c$ color classes. Then, for $K=K(c)$ as defined above, the following hold: 

\begin{itemize} 

\item[$(a)$] If $\delta(Q)   >  K n^{\frac{1}{3}}$, then $X^{(c)}_{\leq (c-2)}(P) \gtrsim n^{\frac{4}{3}}$. 
 
\item[$(b)$] If $\delta(Q)  \leq K n^{\frac{1}{3}}$, then 
\begin{align}\label{eq:Qc2}
\frac{|P^{(t)}|}{c} -  (c-1) K     n^{\frac{1}{3}} \leq |Q_1| \leq \frac{|P^{(t)}|}{c} + K n^{\frac{1}{3}} , 
\end{align}
and, for $a \in \{2, 3, \ldots, c\}$, 
\begin{align}\label{eq:Qa2c}
\frac{|P^{(t)}|}{c} -  ( 2 c-1) K     n^{\frac{1}{3}} \leq |Q_a| \leq \frac{|P^{(t)}|}{c} + K n^{\frac{1}{3}} . 
\end{align}

\end{itemize}

\end{lemma}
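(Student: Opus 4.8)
The plan is to read off the color classes of $Q$ from those of $P^{(t)}$, feed $Q$ into the Discrepancy Lemma for part $(a)$, and do elementary counting for part $(b)$. The starting point is the geometric description of $Q = P^{(t)} \cap CH(P^{(t)}_1)$: every color-$1$ point of $P^{(t)}$ already lies in $CH(P^{(t)}_1)$, so $Q_1 = P^{(t)}_1$ and $|Q_1| = |P^{(t)}_1|$, whereas for $a \geq 2$ we keep only the color-$a$ points falling inside the hull, so $Q_a \subseteq P^{(t)}_a$. The one fact I would isolate first, and which I expect to be the crux, is that discarding points lying \emph{outside} $CH(P^{(t)}_1)$ does not change the interior count of any triangle spanned by $Q$: every such triangle is contained in the convex set $CH(P^{(t)}_1)$, so no discarded point (being exterior to this hull) can lie in its interior. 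Combined with the recursion's invariant that the color-$1$ hulls nest, so that any point removed at an earlier round is either exterior to, or an extreme point of, a hull containing $CH(P^{(t)}_1)$ and hence is never interior to a triangle inside $CH(P^{(t)}_1)$, this shows that for every triangle with vertices in $Q$ the numbers of interior points of $Q$, of $P^{(t)}$, and of $P$ all coincide. Thus a monochromatic triangle that is almost empty relative to $Q$ is genuinely almost empty relative to $P$.

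For part $(a)$ I would simply apply Lemma \ref{lemma3} to $Q$. Since $\delta(Q) > K n^{\frac{1}{3}}$ exceeds $\frac{4(c-1)}{c}$ for $n$ large, the lemma gives $X^{(c)}_{\leq (c-2)}(Q) \geq \frac{\delta(Q)\,|Q|}{6c}$. I would then bound $|Q| \geq |Q_1| = |P^{(t)}_1| \geq \frac{|P^{(t)}|}{c} - (c-1) K n^{\frac{1}{3}} \gtrsim n$, using $|P^{(t)}| \geq \tilde n \gtrsim n$, so that $X^{(c)}_{\leq (c-2)}(Q) \gtrsim n^{\frac{1}{3}} \cdot n = n^{\frac{4}{3}}$. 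By the interior-count invariant above, each of these triangles has at most $c-2$ interior points of $P$ as well, hence $X^{(c)}_{\leq (c-2)}(P) \geq X^{(c)}_{\leq (c-2)}(Q) \gtrsim n^{\frac{4}{3}}$, as claimed.

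For part $(b)$, the bounds are an arithmetic consequence of $\delta(Q) \leq K n^{\frac{1}{3}}$ together with \eqref{eq:P1}. The estimate \eqref{eq:Qc2} is immediate from $|Q_1| = |P^{(t)}_1|$ and the bounds \eqref{eq:P1} on $|P^{(t)}_1|$. For \eqref{eq:Qa2c} with $a \geq 2$, the upper bound follows from $|Q_a| \leq |P^{(t)}_a| \leq \frac{|P^{(t)}|}{c} + K n^{\frac{1}{3}}$. For the lower bound I would first bound $|Q|$ from below: since $\delta(Q) = \max_b |Q_b| - \frac{|Q|}{c} \leq K n^{\frac{1}{3}}$ and $\max_b |Q_b| \geq |Q_1| \geq \frac{|P^{(t)}|}{c} - (c-1) K n^{\frac{1}{3}}$, we obtain $\frac{|Q|}{c} \geq |Q_1| - \delta(Q) \geq \frac{|P^{(t)}|}{c} - c K n^{\frac{1}{3}}$. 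Applying Lemma \ref{lemma1} to $Q$ then yields $|Q_a| \geq \frac{|Q|}{c} - (c-1)\delta(Q) \geq \frac{|P^{(t)}|}{c} - c K n^{\frac{1}{3}} - (c-1) K n^{\frac{1}{3}} = \frac{|P^{(t)}|}{c} - (2c-1) K n^{\frac{1}{3}}$, which is exactly \eqref{eq:Qa2c}.

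The main obstacle is the geometric bookkeeping in the first paragraph: ensuring that the triangles produced by the Discrepancy Lemma on the reduced set $Q$ remain almost empty with respect to the \emph{original} point set $P$, not merely with respect to $Q$ or $P^{(t)}$. Once the invariant ``discarded points never enter the interior of a triangle spanned by $Q$'' is established, both parts are short: part $(a)$ is a single application of Lemma \ref{lemma3} together with the trivial bound $|Q| \gtrsim n$, and part $(b)$ is elementary counting built on \eqref{eq:P1} and Lemma \ref{lemma1}.
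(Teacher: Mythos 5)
Your proposal is correct and follows essentially the same route as the paper: part $(a)$ is the Discrepancy Lemma (Lemma \ref{lemma3}) applied to $Q$ together with $|Q| \geq |Q_1| = |P^{(t)}_1| \gtrsim n$ from \eqref{eq:P1}, and part $(b)$ is the same elementary arithmetic combining \eqref{eq:P1}, $\delta(Q) \leq K n^{\frac{1}{3}}$, and Lemma \ref{lemma1}. The one place you go beyond the paper is the geometric invariant justifying $X^{(c)}_{\leq (c-2)}(P) \geq X^{(c)}_{\leq (c-2)}(Q)$: the paper asserts this inequality without comment, yet it is not automatic for arbitrary subsets (a triangle that is almost empty with respect to $Q$ could in principle acquire interior points when counted against $P$), so your observation that every discarded point is either exterior to, or an extreme point of, a nested hull containing $CH(P^{(t)}_1)$ --- and hence never interior to a triangle spanned by $Q$ --- is precisely the missing justification, and it is a worthwhile addition rather than a deviation.
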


\begin{proof} Note that in the construction of $Q$, no point of color $1$ is removed from $P^{(t)}$, hence, $Q_1= P^{(t)}_1$. Therefore,  from \eqref{eq:P1}, 
\begin{align}\label{eq:Qc}
\lvert Q \rvert \geq \lvert Q_1 \rvert = \lvert P^{(t)}_1 \rvert \geq \frac{|P^{(t)}|}{c} -  (c-1) K     n^{\frac{1}{3}} \gtrsim n . 
\end{align}
Hence, if $\delta(Q)  > K n^{\frac{1}{3}}$, then applying Lemma $\ref{lemma3}$ gives, 
$$X^{(c)}_{\leq (c-2)}(P) \geq X^{(c)}_{\leq (c-2)}(Q) \gtrsim \frac{ \delta(Q) |Q|}{c}  \gtrsim n^{\frac{4}{3}} .$$
This proves Lemma \ref{lm:Q} (a). 

Now, assume that $\delta(Q)  \leq K n^{\frac{1}{3}}$. Then by Lemma \ref{lemma1}, for all $a \in [c]$, 
\begin{align}\label{eq:Qa}
|Q_a| \leq \frac{|Q|}{c} + K n^{\frac{1}{3}} \leq \frac{|P^{(t)}|}{c} + K n^{\frac{1}{3}} . 
\end{align}
Combining \eqref{eq:Qc} and \eqref{eq:Qa} shows \eqref{eq:Qc2}. 

Next, suppose $a \in \{2, 3, \ldots, c\}$. Then, 
\begin{align}
|Q_a| & \geq \frac{|Q|}{c} - (c-1) \delta(Q)  \tag*{(by the lower bound in Lemma \ref{lemma1})} \nonumber \\ 
& \geq |Q_1| - c \delta(Q) \tag*{(using $\frac{|Q|}{c} \geq |Q_1| -\delta(Q)$, from the upper bound in Lemma \ref{lemma1}) } \nonumber \\ 
& \geq |Q_1| - c K     n^{\frac{1}{3}} \tag*{(since $\delta(Q) \leq K n^{\frac{1}{3}}$)} \nonumber \\ 
& \geq \frac{|P^{(t)}|}{c} -  (2c-1) K     n^{\frac{1}{3}} , 
\label{eq:QaQ1}
\end{align}
where the last step uses lower bound in \eqref{eq:Qc2}. Combining \eqref{eq:QaQ1} with the upper bound from \eqref{eq:Qa} shows \eqref{eq:Qa2c}. 
\end{proof}

Next, we show that if $\lvert V(CH(Q)) \rvert $ is large, then $Q \setminus V(CH(Q)$ has large discrepancy, hence, it contains the desired number of monochromatic triangles with at most $c-2$ interior points.

\begin{lemma} Suppose $\delta(Q)  \leq K n^{\frac{1}{3}}$. If $\lvert V(CH(Q)) \rvert > K' n^{\frac{1}{3}}$, where $K'=c(2c+1)K$, then, $\delta(Q \setminus V(CH(Q)) ) \geq K n^{\frac{1}{3}}$. Consequently, $X^{(c)}_{\leq (c-2)}(P) \gtrsim n^{\frac{4}{3}}$.  
\label{lm:Qc}
\end{lemma}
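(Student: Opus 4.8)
The plan is to reduce everything to a single structural observation about the construction of $Q$, and then carry out a short discrepancy computation with careful tracking of the $O(n^{\frac{1}{3}})$ error terms. The key point is that $Q$ was obtained by deleting from $P^{(t)}$ every point lying outside $CH(Q_1)$; hence $CH(Q) = CH(Q_1)$, and therefore every vertex of $CH(Q)$ is a point of color $1$, i.e. $V(CH(Q)) \subseteq Q_1$. Writing $Q' = Q \setminus V(CH(Q))$ and $v = |V(CH(Q))|$, deleting the hull vertices only shrinks the first color class: $|Q'_1| = |Q_1| - v$, while $|Q'_a| = |Q_a|$ for every $a \in \{2, \ldots, c\}$, and $|Q'| = |Q| - v$. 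The hypothesis $v > K' n^{\frac{1}{3}}$ with $K' = c(2c+1)K$ thus removes many color-$1$ points while leaving the other classes untouched, which is exactly the mechanism that creates discrepancy.

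Next I would lower bound $\delta(Q')$. Since the largest class is now among $\{2,\dots,c\}$, I use $\delta(Q') \geq \max_{a \geq 2}|Q'_a| - \frac{|Q'|}{c} \geq \frac{|Q|-|Q_1|}{c-1} - \frac{|Q|-v}{c}$, the second inequality because the maximum of the classes $2,\dots,c$ is at least their average. Writing $m = |Q|$ and $q_1 = |Q_1|$ and simplifying gives $\delta(Q') \geq \frac{m - c q_1 + (c-1)v}{c(c-1)}$. Here $m - c q_1 = \sum_{a\geq 2}(|Q_a| - |Q_1|)$, and each summand is at least $-2cK n^{\frac{1}{3}}$ by Lemma~\ref{lm:Q}(b) (the upper bound $|Q_1| \leq \frac{|P^{(t)}|}{c} + Kn^{\frac{1}{3}}$ minus the lower bound $|Q_a| \geq \frac{|P^{(t)}|}{c} - (2c-1)Kn^{\frac{1}{3}}$), so $m - cq_1 \geq -2c(c-1)Kn^{\frac{1}{3}}$. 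Substituting yields $\delta(Q') \geq \frac{v}{c} - 2Kn^{\frac{1}{3}}$, and the choice $K' = c(2c+1)K$ forces $\frac{v}{c} > (2c+1)Kn^{\frac{1}{3}}$, whence $\delta(Q') > (2c-1)Kn^{\frac{1}{3}} \geq Kn^{\frac{1}{3}}$ since $c \geq 2$. This proves the first assertion.

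For the consequence I would apply the Discrepancy Lemma (Lemma~\ref{lemma3}) to $Q'$. Both hypotheses hold for $n$ large: $\delta(Q') > (2c-1)Kn^{\frac{1}{3}}$ eventually exceeds the constant $\frac{4(c-1)}{c}$, and $|Q'| = |Q| - v \geq \sum_{a\geq 2}|Q_a| \geq (c-1)\left(\frac{|P^{(t)}|}{c} - (2c-1)Kn^{\frac{1}{3}}\right) \gtrsim n$, using $|P^{(t)}| \geq \tilde n \gtrsim n$ and that the correction is only of order $n^{\frac{1}{3}}$. Lemma~\ref{lemma3} then gives $X^{(c)}_{\leq(c-2)}(Q') \geq \frac{\delta(Q')|Q'|}{6c} \gtrsim n^{\frac{1}{3}} \cdot n = n^{\frac{4}{3}}$. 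Finally I would pass from $Q'$ to $P$ as in Lemma~\ref{lm:Q}(a): the triangles produced by Lemma~\ref{lemma3} are spanned by color-$1$ points lying interior to $CH(Q)$ and are counted relative to $Q'$, and by the convex-hull-peeling nature of the recursion every previously deleted point lies outside $CH(Q')$, so each such triangle has the same interior count in $P$ as in $Q'$; hence $X^{(c)}_{\leq(c-2)}(P) \geq X^{(c)}_{\leq(c-2)}(Q') \gtrsim n^{\frac{4}{3}}$.

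The one genuinely essential observation is $V(CH(Q)) \subseteq Q_1$; everything else is bookkeeping with the $O(n^{\frac{1}{3}})$ error terms. The only delicate point in that bookkeeping is that color $1$ and the remaining colors carry \emph{asymmetric} error bounds ($(c-1)K$ versus $(2c-1)K$ in Lemma~\ref{lm:Q}), which is precisely why the threshold must be set at $K' = c(2c+1)K$ rather than something smaller, so that $\frac{v}{c}$ dominates the accumulated slack. I also expect the monotonicity step $X^{(c)}_{\leq(c-2)}(P) \geq X^{(c)}_{\leq(c-2)}(Q')$ to deserve a sentence of justification, but it is the same passage already used earlier in the proof and rests entirely on the fact that the recursion only ever deletes extreme points, so no deleted point can sit inside a triangle built from the surviving interior points.
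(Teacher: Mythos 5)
Your proof is correct and takes essentially the same route as the paper's: both rest on the observation that $V(CH(Q))\subseteq Q_1$, lower-bound the discrepancy of $Q'=Q\setminus V(CH(Q))$ using the class-size bounds of Lemma~\ref{lm:Q}(b), and then apply the Discrepancy Lemma (Lemma~\ref{lemma3}) to $Q'$ together with $X^{(c)}_{\leq(c-2)}(P)\geq X^{(c)}_{\leq(c-2)}(Q')$. The only differences are cosmetic: your bookkeeping (averaging the classes $2,\ldots,c$ and keeping $v=|V(CH(Q))|$ explicit) yields the marginally stronger bound $\delta(Q')>(2c-1)Kn^{\frac{1}{3}}$, and you additionally spell out why the monotonicity step is legitimate (all previously deleted points lie outside $CH(Q')$), which the paper leaves implicit.
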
 

\begin{proof} Let $Q' = Q \setminus V(CH(Q))$. Denote by $Q'_1, Q'_2, \ldots, Q'_c$ the $c$ color classes of $Q'$. Note that all points of $V(CH(Q))$ have color 1. Hence, $|Q'_1| \leq |Q_1| $ and $|Q'_a| = |Q_a| $, for $a \in \{2, 3, \ldots, c\}$. 
Then, for $a \in \{2, 3, \ldots, c\}$, the upper bound in \eqref{eq:Qa2c} gives, 
$$|Q'_a| = |Q_a| \leq \frac{|P^{(t)}|}{c} + K n^{\frac{1}{3}}.$$  Also,  the upper bound  in \eqref{eq:Qc2} and the assumption $\lvert V(CH(Q)) \rvert > K' n^{\frac{1}{3}}$ gives, 
$$|Q_1'| = |Q_1| - |V(CH(Q_1))| < \frac{|P^{(t)}|}{c} + K n^{\frac{1}{3}} - K'  n^{\frac{1}{3}} . $$
Hence, combining the above two inequalities, 
\begin{align}\label{eq:sum}
|Q'| = \sum_{a=1}^c |Q'_a| < |P^{(t)}| + c K n^{\frac{1}{3}} - K'  n^{\frac{1}{3}} . 
\end{align}
On the other hand, the lower bound in \eqref{eq:Qa2c} gives, 
\begin{align}\label{eq:maximum}
\max_{a \in [c]} |Q'_a| \geq |Q'_2|  \geq \frac{|P^{(t)}|}{c} -  (2c-1) K     n^{\frac{1}{3}} . 
\end{align}
Combining \eqref{eq:sum} and \eqref{eq:maximum}, 
$$\delta(Q') = \max_{a \in [c]} |Q'_a| -  \frac{|Q'|}{c} > - 2 c K     n^{\frac{1}{3}} + \frac{K'}{c} n^{\frac{1}{3}} = K n^{\frac{1}{3}} . $$
Moreover, from the lower bound in \eqref{eq:Qa2c}, 
\begin{align*}
|Q'| \geq \sum_{a=2}^c |Q'_a|  \geq \frac{c-1}{c} |P^{(t)}| -  (2c-1) (c-1) K     n^{\frac{1}{3}} \gtrsim n ,  
\end{align*}  
since $|P^{(t)}| \gtrsim n$. Then applying Lemma \ref{lemma3} gives, $X^{(c)}_{\leq (c-2)}(P) \geq X^{(c)}_{\leq (c-2)}(Q') \gtrsim \frac{ \delta(Q') |Q'|}{c} \gtrsim  n^{\frac{4}{3}}$.  
\end{proof}

Now, suppose $V(CH(Q)) = \{p_1, p_2, \ldots, p_R \}$ with points arranged in  clockwise order (as in Figure \ref{figure:vertex9}). Recall that all the points in $V(CH(Q))$ have color 1 and by the above lemmas it suffices to assume that $\delta(Q) \leq K n^{\frac{1}{3}}$ and $R \leq K' n^{\frac{1}{3}}$.  
Triangulate the convex hull $CH(Q)$ by adding the diagonals $p_1 p_{b+1}$, for $b \in \{ 2,\dots , R - 2\}$. Let $$T^{(b)}= \mathrm{int} (\Delta(p_1p_{b+1} p_{b+2}) ) , $$ for $b \in \{1,\dots, R - 2\}$.  Now, there are two cases:

\begin{figure}[ht]
     \centering
        \includegraphics[width=0.6\textwidth]{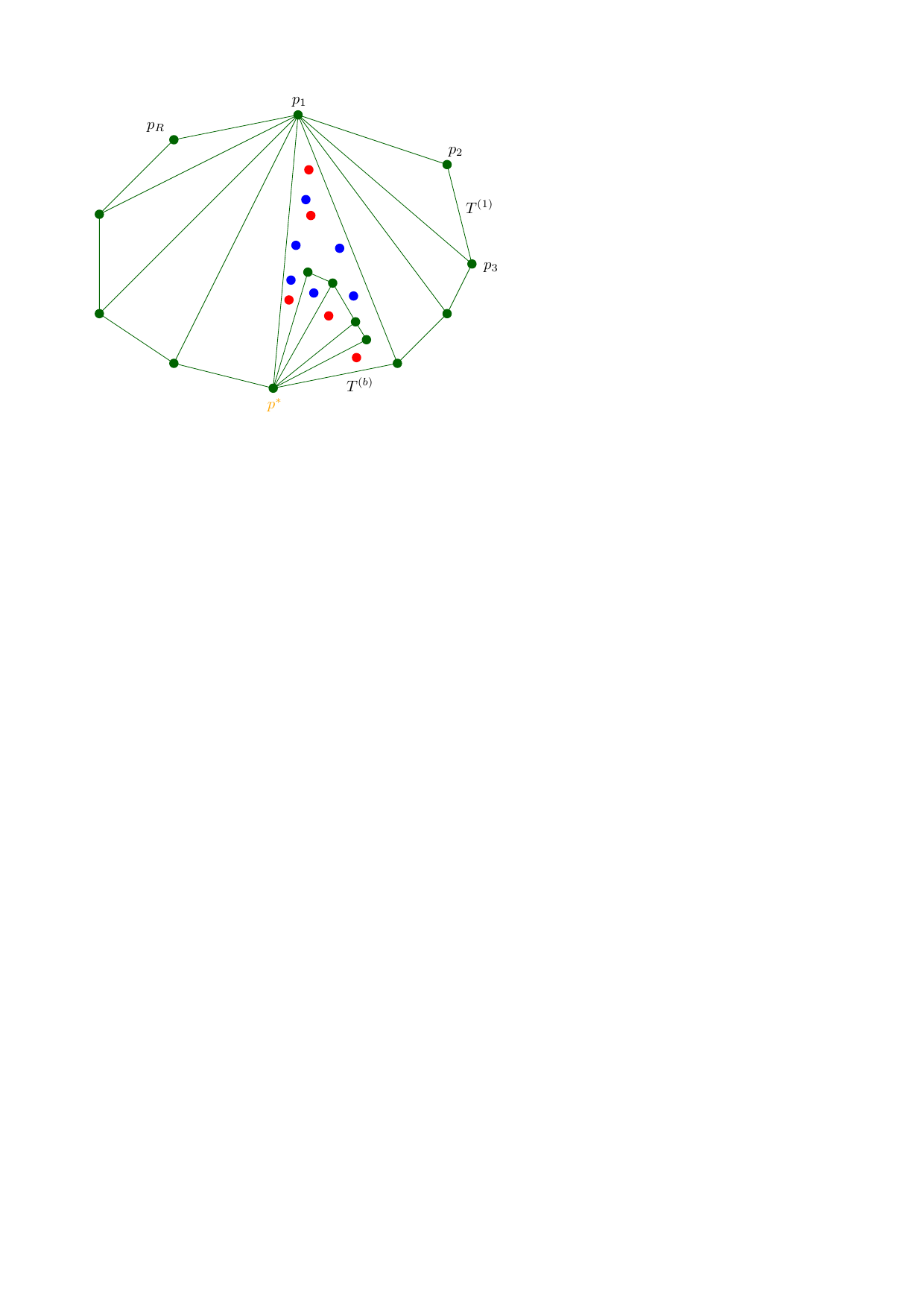}
         \caption{\small{Illustration for the proof of Theorem \ref{thm:k_general}. } }
         \label{figure:vertex9}
\end{figure}

\begin{itemize}

\item Suppose $\delta(T^{(b)} \cap Q )  >  K' n^{\frac{1}{3}}$, for some $b \in \{1, 2, \ldots, R-2\}$. 
Note one of three regions $T^{(b)}$, $T^{(1)}\ \cup \  T^{(2)} \dots \cup \    T^{(b-1)}$, and $T^{(b+1)}\  \cup \dots \cup \  T^{(R-2)}$ contains at least 
\begin{align} 
\frac {\sum_{a=2}^{c}|Q_a|+ (|Q_1|-R)}{3} & \geq  \frac{|P^{(t)}| -  2 c (c-1) K     n^{\frac{1}{3}} -R}{3} \tag*{(by \eqref{eq:Qc2} and \eqref{eq:Qa2c})}\nonumber \\ 
& \geq \frac{|P^{(t)}| -  2 c (c-1) K     n^{\frac{1}{3}} - c (2c+1)  K     n^{\frac{1}{3}} }{3} \tag*{(since $R \leq K' n^{\frac{1}{3}})$} \nonumber \\ 
& = \frac{|P^{(t)}|}{3}-\frac{c (4c - 1 ) K     n^{\frac{1}{3}} }{3} \gtrsim n , 
\label{eq:QPt} 
\end{align} 
points of $Q$. If $T^{(b)}$ is the region for which \eqref{eq:QPt} holds, then by Lemma \ref{lemma3}, $$X^{(c)}_{\leq (c-2)}(P) \geq X^{(c)}_{\leq (c-2)}(T^{(b)} \cap Q) \gtrsim \frac{ \delta(T^{(b)} \cap Q ) |T^{(b)} \cap Q |}{c} \gtrsim n^{\frac{4}{3}} .$$
Next, suppose $T^{(1)}\cup T^{(2)} \dots \cup T^{(b-1)}$ be the region such that \eqref{eq:QPt} holds. Then
\begin{align}\label{eq:Tb}
\delta((T^{(1)}\cup T^{(2)} \dots \cup T^{(b-1)}) \cap Q )  >  \frac{K' n^{\frac{1}{3}} }{2} \text{ or } \delta((T^{(1)}\cup T^{(2)} \dots \cup T^{(b)}) \cap Q )  >  \frac{K' n^{\frac{1}{3}} }{2} . 
\end{align} 
The proof of \eqref{eq:Tb} is given later in Section \ref{sec:trianglediscpf}. It follows from \eqref{eq:Tb} and Lemma \ref{lemma3} that either $X^{(c)}_{\leq (c-2)}((T^{(1)}\cup T^{(2)} \dots \cup T^{(b-1)}) \cap Q )  \gtrsim n^{\frac{4}{3}}$ or $X^{(c)}_{\leq (c-2)}((T^{(1)}\cup T^{(2)} \dots \cup T^{(b)}) \cap Q )  \gtrsim n^{\frac{4}{3}}$.  The same argument holds if \eqref{eq:QPt} holds for $T^{(b+1)}\cup T^{(b+2)} \dots \cup T^{(R-2)}$.

\item Now, suppose $\delta(T^{(b)} \cap Q ) \leq K' n^{\frac{1}{3}}$, for all $b \in \{1, 2, \ldots, R-2\}$. 
Note that there exists $b \in \{1, 2, \ldots, R-2\} $ such that $T^{(b)}_1$, which denotes the subset of points of $Q$ of color 1 in $T^{(b)}$, satisfies: 
$$|T^{(b)}_1| \geq \frac{|Q_1|-|V(CH(Q))|}{R-2} \geq  \frac{|Q_1|}{R} - 1 \geq \frac{|P^{(t)}|}{c K' n^{\frac{1}{3}}} -  \frac{(c-1) K}{K'} - 1 \geq \frac{n^{\frac{2}{3}}}{ 8 c^3 (c+1)^2 K } , $$  
for $n$ sufficiently large. By Lemma \ref{lemma2}, we can triangulate the points in $T^{(b)}_1$ and the vertices of $\{p_1, p_{b+1}, p_{b+2}\}$, such that at least $\lvert T^{(b)}_1 \rvert + \sqrt{\lvert T^{(b)}_1 \rvert}$ triangles are adjacent to one of the vertices of the triangle $\Delta(p_1p_{b+1}p_{b+2})$. The vertices of these triangles have color 1, and at most 
$$ \frac{\sum_{a=2}^{c}| T^{(b)}_a| }{c-1} $$
of them are $(c-1)$-blocked.  Note that, from Lemma 3 it follows that, for $a \in \{2, 3, \ldots, c\}$
$$|T^{(b)}_a| \leq \frac{|T^{(b)} \cap Q |}{c} + \delta(T^{(b)} \cap Q ) , $$ 
and $|T^{(b)}_1| \geq \frac{|T^{(b)} \cap Q |}{c} - (c-1) \delta(T^{(b)} \cap Q)$.  
Hence, the number of triangles of color 1 that have $\{p_1, p_{b+1}, p_{b+2}\}$ as one of the vertices and which have at most $c-2$ interior points is at least  
\begin{align*}
\lvert T^{(b)}_1 \rvert + \sqrt{\lvert T^{(b)}_1 \rvert} - \frac{\sum_{a=2}^{c}| T^{(b)}_a | }{c-1} 
& \geq \frac{n^{\frac{1}{3}}}{ \sqrt{8 c^3 (c+1)^2 K} } - c \delta(T^{(b)} \cap Q) \nonumber \\ 
& \geq \frac{n^{\frac{1}{3}}}{ \sqrt{8 c^3 (c+1)^2 K} } - c K' n^{\frac{1}{3}} \nonumber \\ 
& = \frac{n^{\frac{1}{3}}}{ \sqrt{8 c^3 (c+1)^2 K} } -c^2 (2c+1) K n^{\frac{1}{3}} \nonumber \\ 
& = \left( \frac{c}{c+1} - \frac{2c+1}{8c^3} \right) n^{\frac{1}{3}} \nonumber \\
& \geq \tfrac{1}{2}n^{\frac{1}{3}} , 
\end{align*}  
since $K= \frac{1}{8 c^5}$ and $c \geq 2$. Hence, there exists a vertex $p^* \in \{ p_1, p_{b+1}, p_{b+2} \}$ such that there are 
$\frac{1}{6} n^{\frac{1}{3}}$ triangles of color 1 with at most $c-2$ interior points and $p^*$ as a vertex (see Figure \ref{figure:vertex9}). Then define $P^{(t+1)} = Q \backslash \{p^*\}$, if $t < \lfloor\frac{n}{c+1}\rfloor$. If $t \geq \lfloor\frac{n}{c+1}\rfloor$, then stop the algorithm. 

\end{itemize} 
Note that, for $t < \lfloor\frac{n}{c+1}\rfloor$,  which means $t \leq \lfloor\frac{n}{c+1}\rfloor - 1 \leq \frac{n}{c+1} - 1$, 
\begin{align*}
|P^{(t+1)}| \geq |P^{(t)}_1| - 1 \geq |P_1| - (t+1) \geq \frac{n}{c} - (t+1) \geq  \frac{n}{c(c+1)} = \tilde{n} .  
\end{align*}
Note that the algorithm described above either terminates at step $t$, for some $t <  \lfloor\frac{n}{c+1}\rfloor$, or at step $\lfloor\frac{n}{c+1}\rfloor$. In the first case, the algorithm stops because we applied the discrepancy lemma to find $\gtrsim n^{\frac{4}{3}}$ monochromatic triangles with at most $c-2$ interior points. In the second
case, algorithm finds at least $\lfloor\frac{n}{c+1}\rfloor -1$ points, each of which is incident 
on $\frac{1}{6} n^{\frac{1}{3}}$ triangles of color 1 and have at most $c-2$ interior points. These points together give $\gtrsim n^{\frac{4}{3}}$ monochromatic triangles with at most $c-2$ interior points. This completes the proof of Theorem \ref{thm:k_general}.  \hfill $\Box$

\subsubsection{Proof of (\ref{eq:Tb}) } 
\label{sec:trianglediscpf}

Recall that we have to show that if $\delta(T^{(b)} \cap Q )  >  K' n^{\frac{1}{3}}$, for some $b \in \{1, 2, \ldots, R-2\}$, then \eqref{eq:Tb} holds. For notational convenience denote $S^{(1)} = T^{(b)} \cap Q$, 
\begin{align*}
S^{(2)} = (T^{(1)}\cup T^{(2)} \dots \cup T^{(b-1)}) \cap Q \text{ and } S^{(3)} = (T^{(1)}\cup T^{(2)} \dots \cup T^{(b)}) \cap Q . 
\end{align*} 
Note that $S^{(1)} \cup S^{(2)} = S^{(3)}$. Hence, for $a \in [c]$, $|S^{(3)}_a| = |S^{(1)}_a| + |S^{(2)}_a|$, where $S^{(1)}_a$ is the set of points of color $a$ in $S^{(1)}$ (and similarly for $S^{(2)}$ and $S^{(3)}$). 
Now, if possible, suppose that 
\begin{align*}
\delta(S^{(2)} ) \leq \frac{K' n^{\frac{1}{3}} }{2} \text{ and } \delta(S^{(3)} ) \leq \frac{K' n^{\frac{1}{3}} }{2} . 
\end{align*} 
Then 
\begin{align*} 
\delta(S^{(1)}) & = \max_{b \in [c]} \left\{ |S^{(1)}_b| - \frac{1}{c} \sum_{a=1}^c |S^{(1)}_a| \right\} \nonumber \\ 
& = \max_{b \in [c]} \left\{ \left( |S^{(3)}_b | - \frac{1}{c} \sum_{a=1}^c |S^{(3)}_a| \right) -  \left( |S^{(2)}_b | - \frac{1}{c} \sum_{a=1}^c |S^{(2)}_a| \right) \right\} \nonumber \\ 
& \leq \delta(S^{(3)}) + \delta(S^{(2)}) \leq K' n^{\frac{1}{3}} , 
\end{align*} 
which is a contradiction. \hfill $\Box$

\subsection{Proof of Theorem \ref{thm:U} }
\label{sec:Upf}

Recall the definition of $Z_{=s}$ from Section \ref{sec:random}. Observe that 
\begin{align*} 
Z_{=s}(\{U_1, U_2, \ldots, U_n\}) = \sum_{1 \leq i < j < k \leq n} \bm 1
\{ |\mathrm{int}(\Delta(U_i, U_j, U_k)) \cap \{U_1, U_2, \ldots, U_n\}|= s \} .  
\end{align*} 
Then, by linearity of expectation, 
\begin{align}\label{eq:Un}
\mathbb E[Z_{=s}(\{U_1, U_2, \ldots, U_n\})] & = \sum_{1 \leq i < j < k \leq n} \mathbb P ( |\mathrm{int}(\Delta(U_i, U_j, U_k)) \cap \{U_1, U_2, \ldots, U_n\}|= s ) \nonumber \\ 
& = {n \choose 3} \mathbb P ( |\mathrm{int}(\Delta(U_1, U_2, U_3)) \cap \{U_1, U_2, \ldots, U_n\}|= s ) ,  
\end{align}
since $U_1, U_2, \ldots, U_n$ are identically distributed. Now, note that 
\begin{align}\label{eq:Uns}
& \mathbb P( |\mathrm{int}(\Delta(U_1, U_2, U_3)) \cap \{U_1, U_2, \ldots, U_n\}|= s ) \nonumber \\ 
& = {n -3 \choose s} \int_{[0, 1]^6}  \lambda(\Delta(u_1, u_2, u_3))^s (1- \lambda(\Delta(u_1, u_2, u_3)))^{n-s-3 } \mathrm du_1 \mathrm d u_2 \mathrm du_3 \\ 
& \sim  \frac{n^s}{ 2^s s! } \int_{[0, 1]^4} \left\{ \int_{[0, 1]^2} \left( \lambda([u_1, u_2]) \cdot |h^{u_3}_{\widebar{u_1u_2}}| \right)^s  \left(1- \frac{1}{2}\lambda([u_1, u_2]) \cdot |h^{u_3}_{\widebar{u_1u_2}}| \right)^{n-s-3} \mathrm d u_3 \right\} \mathrm du_1 \mathrm d u_2 , \nonumber 
\end{align}
where $\widebar{u_1 u_2}$ is the line joining $u_1$ and $u_2$, $[u_1, u_2]$ is the line segment joining $u_1$ and $u_2$, and $h^{u_3}_{\widebar{u_1u_2}}$ is the (signed) length of the perpendicular from the point $u_3$ to  $\widebar{u_1 u_2}$. For fixed $u_1, u_2 \in [0, 1]^2$, define 
\begin{align}\label{eq:Gn}
G_n(u_1, u_2) = n^{s+1} \int_{[0, 1]^2} \left( \lambda([u_1, u_2]) \cdot |h^{u_3}_{\widebar{u_1u_2}} | \right)^s  \left(1- \frac{1}{2}\lambda([u_1, u_2]) \cdot | h^{u_3}_{\widebar{u_1u_2}} | \right)^{n-s-3} \mathrm d u_3. 
\end{align} 
Combining \eqref{eq:Un}, \eqref{eq:Uns}, and \eqref{eq:Gn} gives, 
\begin{align}\label{eq:GU}
\frac{\mathbb E[Z_{=s}(\{U_1, U_2, \ldots, U_n\})]}{n^2} \sim \frac{1}{ 6 } \cdot \frac{1}{ 2^s s! } \int_{[0, 1]^4} G_n(u_1, u_2) \mathrm d u_1 \mathrm d u_2 . 
\end{align} 
Hence, to prove \eqref{eq:XA2} it suffices to calculate the limiting value of $\int_{[0, 1]^4} G_n(u_1, u_2) \mathrm d u_1 \mathrm d u_2$. This is derived in the following lemma: 


\begin{lemma}\label{lm:Gn} For $G_n$ as defined in \eqref{eq:Gn}, 
\begin{align}\label{eq:Gn2}
\lim_{n \rightarrow \infty} \frac{1}{ 6 } \cdot \frac{1}{ 2^s s! }  \int_{[0, 1]^4} G_n(u_1, u_2) \mathrm d u_1 \mathrm d u_2 = 2  . 
\end{align}
\end{lemma}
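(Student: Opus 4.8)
The plan is to evaluate the inner integral $G_n$ asymptotically for each fixed pair $u_1\neq u_2$, interchange the limit with the outer integration over $[0,1]^4$, and finally reduce the resulting geometric integral to a classical chord-power integral of the square. For the pointwise asymptotics, fix $u_1\neq u_2$ and write $b=\lambda([u_1,u_2])$ for the base length. I would change coordinates for $u_3\in[0,1]^2$ to $(t,h)$, where $h=h^{u_3}_{\overline{u_1u_2}}$ is the signed perpendicular distance to the line $\overline{u_1u_2}$ and $t$ is arclength along that line, so that $du_3=dt\,dh$ and integrating out $t$ produces $\ell(h)$, the length of the chord of $[0,1]^2$ parallel to $\overline{u_1u_2}$ at perpendicular distance $h$. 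Since $\Delta(u_1,u_2,u_3)\subseteq[0,1]^2$, its area $\tfrac12 b|h|\le 1$, so $|h|\le 2/b$; substituting $x=\tfrac12 b|h|$ (separately for each sign of $h$) turns $G_n$ into $\tfrac{2^{s+1}}{b}\,n^{s+1}$ times an integral of $\ell(\pm 2x/b)\,x^s(1-x)^{n-s-3}$ over a subinterval of $[0,1]$. The factor $(1-x)^{n-s-3}$ concentrates all mass at $x=0$; quantitatively $n^{s+1}B(s+1,n-s-2)\to s!$ and $\ell(\pm 2x/b)\to \ell(0)=\ell_0(u_1,u_2)$, the full chord of the square along $\overline{u_1u_2}$. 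Adding the two sides gives the pointwise limit
$$\lim_{n\to\infty} G_n(u_1,u_2)=\frac{2^{s+2}\,s!\,\ell_0(u_1,u_2)}{b}.$$

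To pass this limit inside $\int_{[0,1]^4}(\cdot)\,du_1\,du_2$ I would apply dominated convergence, and this is the step I expect to be the main obstacle, since the pointwise limit blows up like $1/b$ on the diagonal $u_1=u_2$. The key is a uniform-in-$n$ bound: using $\ell(h)\le\sqrt2$ and the same substitution, $G_n(u_1,u_2)\le C/b=C/|u_1-u_2|$ for all large $n$, where $C=C(s)$ depends only on $s$ because $n^{s+1}B(s+1,n-s-2)$ is bounded. The dominating function $1/|u_1-u_2|$ is integrable on $[0,1]^4$, as $\int_{[0,1]^2}du_2/|u_1-u_2|$ is finite and bounded uniformly in $u_1$ (the singularity $1/r$ being integrable in the plane). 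Dominated convergence then yields
$$\frac{1}{6}\cdot\frac{1}{2^s s!}\int_{[0,1]^4}G_n(u_1,u_2)\,du_1\,du_2 \;\longrightarrow\; \frac{2}{3}\int_{[0,1]^4}\frac{\ell_0(u_1,u_2)}{|u_1-u_2|}\,du_1\,du_2 ,$$
so it remains only to show that this last integral equals $3$.

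For the geometric integral I would use the planar two-point (Blaschke--Petkantschin) change of variables: parametrize the line through $u_1,u_2$ by $(\alpha,p)$, with direction $\alpha\in[0,\pi)$ and signed distance $p$, and parametrize $u_1,u_2$ by their arclength positions $t_1,t_2\in[0,L]$ on the chord of length $L=\ell_0$. A direct $4\times4$ determinant computation at $\alpha=0$, together with rotational invariance, gives $du_1\,du_2=|t_1-t_2|\,dt_1\,dt_2\,dp\,d\alpha$. Since $|u_1-u_2|=|t_1-t_2|$ and $\ell_0=L$ depends only on the line, the two factors of $|t_1-t_2|$ cancel, and $\int_{[0,1]^4}\tfrac{\ell_0}{|u_1-u_2|}=\int L\cdot\big(\int_0^L\!\int_0^L dt_1\,dt_2\big)\,dp\,d\alpha=\int L^3\,dp\,d\alpha$, the third chord-power integral of the square. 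To evaluate it without external input I would apply the same change of variables to the constant integrand $1$: since $\int_0^L\!\int_0^L|t_1-t_2|\,dt_1\,dt_2=\tfrac{L^3}{3}$, we get $\lambda([0,1]^2)^2=\int_{[0,1]^4}du_1\,du_2=\tfrac13\int L^3\,dp\,d\alpha$, whence $\int L^3\,dp\,d\alpha=3$ (as $\lambda([0,1]^2)=1$). Therefore $\int_{[0,1]^4}\tfrac{\ell_0}{|u_1-u_2|}=3$, and the limit equals $\tfrac23\cdot 3=2$, as claimed.
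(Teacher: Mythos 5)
Your proof is correct, and while it shares the paper's overall skeleton (pointwise asymptotics of $G_n$, dominated convergence, then reduction to a Blaschke--Petkantschin-type geometric integral), it differs in three substantive ways. First, for the pointwise limit you keep the variable at scale $O(1/n)$ via the substitution $x = \tfrac12 b|h|$ and use Beta-function asymptotics $n^{s+1}B(s+1,n-s-2) \to s!$, whereas the paper rescales $h = z/n$ so that $(1-\tfrac12 b|z|/n)^{n-s-3} \to e^{-b|z|/2}$ and then evaluates a Gamma integral; these are equivalent computations and both land on $2^{s+2}s!\,\ell_0(u_1,u_2)/\lambda([u_1,u_2])$. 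Second, and more importantly, you exhibit an explicit dominating function: the uniform bound $G_n(u_1,u_2) \le C(s)/|u_1-u_2|$ (from $\ell \le \sqrt 2$ and boundedness of the Beta term) together with the planar integrability of $1/|u_1-u_2|$. The paper invokes dominated convergence without naming a dominating function, so your argument closes a step the paper leaves to the reader; this is the genuine technical content of the lemma, since the pointwise limit blows up on the diagonal. Third, for the concluding identity $\int_{[0,1]^4} \ell_0(u_1,u_2)/|u_1-u_2|\,\mathrm du_1\,\mathrm du_2 = 3$, the paper cites the Blaschke--Petkantschin formula from \cite{reitzner2024stars} together with identity (8.56) of \cite{schneider2008stochastic}, while you rederive it self-containedly by applying the same two-point change of variables $\mathrm du_1\,\mathrm du_2 = |t_1-t_2|\,\mathrm dt_1\,\mathrm dt_2\,\mathrm dp\,\mathrm d\alpha$ once to the integrand (giving the third chord-power integral $\int L^3\,\mathrm dp\,\mathrm d\alpha$) and once to the constant function $1$ (giving $\int L^3\,\mathrm dp\,\mathrm d\alpha = 3\lambda([0,1]^2)^2 = 3$); this elegant bootstrapping avoids any external chord-power reference. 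In short: your route is slightly longer but more self-contained and more rigorous at exactly the point where the paper is terse; the paper's route is shorter at the cost of outsourcing both the domination argument and the chord-power identity.
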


\begin{proof} 
For fixed $h \in \mathbb R $, let $L_{\widebar{u_1u_2}}(h)$ be the line parallel to $\widebar{u_1u_2}$ with distance $|h|$ from $\widebar{u_1u_2}$. 
Then, by Fubini's theorem, \eqref{eq:Gn} can be rewritten as: 
\begin{align*}  
& \int_{[0, 1]^4} G_n(u_1, u_2) \mathrm d u_1 \mathrm du_2 \nonumber \\ 
& = n^{s+1}  \int_{[0, 1]^2} \int_{\mathbb R} \left( \lambda([u_1, u_2]) |h| \right)^s \left(1- \frac{1}{2}\lambda([u_1, u_2]) |h|  \right)^{n-s-3} \lambda(L_{\widebar{u_1u_2}}(h) \cap  [0, 1]^2 ) \mathrm d h  \mathrm d u_1 \mathrm du_2 .
\end{align*}
Consider the change of variable $h= \frac{z}{n}$. Then 
\begin{align}\label{eq:h}
& \int_{[0, 1]^4} G_n(u_1, u_2) \mathrm d u_1 \mathrm du_2 \nonumber \\ 
 & =  \int_{[0, 1]^4} \int_{\mathbb R} \left( \lambda([u_1, u_2]) |z| \right)^s \left(1- \frac{1}{2}\lambda([u_1, u_2]) \frac{|z|}{n}  \right)^{n-s-3} \lambda(L_{\widebar{u_1u_2}}\left(\frac{z}{n}\right) \cap [0, 1]^2) \mathrm d z .  
\end{align} 
Note that for every fixed $s \geq 0$, $u_1, u_2 \in [0, 1]^2$, and $z \in \mathbb R$, 
\begin{align}\label{eq:exponent}
 \left(1- \frac{1}{2}\lambda([u_1, u_2]) \frac{|z|}{n}  \right)^{n-s-3} \lambda(L_{\widebar{u_1u_2}}\left(\frac{z}{n}\right) \cap [0, 1]^2)  & \rightarrow   e^{- \frac{1}{2}\lambda([u_1, u_2]) |z|} \lambda(L_{\widebar{u_1u_2}}(0) \cap [0, 1]^2) . 
\end{align} 
Hence, by \eqref{eq:h} and \eqref{eq:exponent}, together with the dominated convergence theorem gives, 
\begin{align*}
& \lim_{n \rightarrow \infty} \int_{[0, 1]^4} G_n(u_1, u_2) \mathrm d u_1 \mathrm du_2 \nonumber \\ 
 & =  \int_{[0, 1]^4} \lambda([u_1, u_2])^s  \lambda(L_{\widebar{u_1u_2}} (0) \cap [0, 1]^2) \left( \int_{\mathbb R} |z|^s e^{- \frac{1}{2}\lambda([u_1, u_2]) |z|} \mathrm d z \right) \mathrm d u_1 \mathrm du_2 .  
\end{align*} 
Now, a standard Gamma integral computation shows, 
$$\int_{\mathbb R} |z|^s e^{- \frac{1}{2}\lambda([u_1, u_2]) |z|} \mathrm d z = \frac{2^{s+2} s! }{\lambda([u_1, u_2])^{s+1}} . $$ 
Hence, 
$$ \lim_{n \rightarrow \infty} \frac{1}{6} \cdot \frac{1}{ 2^s s! } \int_{[0, 1]^4} G_n(u_1, u_2) \mathrm d u_1 \mathrm du_2 =  \frac{2}{3} \int_{[0, 1]^4} \frac{\lambda(L_{\widebar{u_1u_2}}(0) \cap [0, 1]^2)}{\lambda([u_1, u_2])}  \mathrm d u_1 \mathrm du_2 =  2 ,$$
where the last step follows from the Blaschke-Petkantschin formula in 2-dimensions (see \cite[Lemma 3.3]{reitzner2024stars}) and the identity from \cite[(8.56)]{schneider2008stochastic}. This completes the proof of Lemma \ref{lm:Gn}. 
\end{proof}

Combining \eqref{eq:GU} and \eqref{eq:Gn2} the result in \eqref{eq:XA2} follows. The result in \eqref{eq:XA2s} follows from \eqref{eq:XA} and \eqref{eq:XA2}. This completes the proof of Theorem \ref{thm:U}. \hfill $\Box$

\section{Conclusions }

In this paper, we initiate the study of counting monochromatic triangles with a few interior points. In addition to improving the bounds obtained in this paper, several other questions emerge. We list a few of them below:

\begin{itemize}

\item Recall that for $c \geq 4$, \citet{cravioto2019almost} improved the upper bound in \eqref{eq:trianglecolorempty} to $c-3$. In particular, for $c = 4$, this implies that any 4-colored point set contains a monochromatic triangle with at most one interior point. Specifically, they showed that $\mathsf{M}_3(4,1) \leq 145$ and, more generally, that $\mathsf{M}_3(c, c-3) = O(c^2)$. This readily implies a linear lower bound on $X_{\leq (c-3)}^{(c)}(n)$. A natural question is to investigate whether this can be improved to a superlinear lower bound.

\item Another direction is to improve the bounds on the $\lambda_3(c)$ (recall \eqref{eq:trianglecolorempty} and the following discussion). A related question is to understand the behavior of  $\lambda_3(c)$ for large $c$. Towards this, we expect the true value for large $c$ to match the lower bound in \eqref{eq:trianglecolorempty} up to smaller order terms, that is,  $\lim_{c \rightarrow \infty} \frac{\lambda_3(c)}{c} = \frac{1}{2}$.

\end{itemize}

\small

\noindent \textbf{Acknowledgement}: BBB was supported by NSF CAREER grant DMS 2046393 and a Sloan Research Fellowship.

\bibliographystyle{plainnat} 
\bibliography{ref.bib}

\end{document}